\def\Pr{\mathop{\rm Pr}}
\def\argmin{\mathop{\rm arg\, min}}
\def\B{{\mathcal B}}
\def\P{{\mathcal P}}
\pgfplotsset{compat=1.6}
\newtheorem{lemma}{Lemma}
\newtheorem{theorem}{Theorem}
\newtheorem{prop}{Proposition}
\newtheorem{assumption}{Assumption}
\theoremstyle{definition}
\theoremstyle{remark}
\newtheorem*{remark}{Remark}
\newcommand{\R}{\mathds{R}}
\newcommand{\Zplus}{\mathds{Z}_+}
\pgfplotsset{soldot/.style={color=blue,only marks,mark=*}}
\pgfplotsset{holdot/.style={color=blue,fill=white,only marks,mark=*}}
\begin{document}
\sloppy
\title{ Learning POMDPs with Linear Function Approximation and Finite Memory
\thanks{}
}

\author{Ali Devran Kara
\thanks{The author is with the Department of Mathematics,
     Florida State University, Tallahassee, FL, USA,
     Email: akara@fsu.edu}
     }
\maketitle

\begin{abstract}
We study reinforcement learning with linear function approximation and finite-memory approximations for partially observed Markov decision processes (POMDPs). We first present an algorithm for the value evaluation of finite-memory feedback policies. We provide error bounds derived from filter stability and projection errors. We then study the learning of  finite-memory based near-optimal Q values. Convergence in this case requires further assumptions on the exploration policy when using general basis functions. We then show that these assumptions can be relaxed for specific models such as those with perfectly linear cost and dynamics, or when using discretization based basis functions.
\end{abstract}

\section{Introduction}
It is well known that for the optimality analysis and learning of stochastic control problems, curse of dimensionality is one of the main challenges. Function approximation methods are widely used to tackle this issue. In particular, reinforcement learning based on linear functions approximations studied in depth in the literature for fully observed control problems. 

\cite{tsitsiklis1997analysis} is among the first studies for the analysis of linear function approximation for policy evaluation in fully observed MDPs. The authors show the convergence of TD($\lambda$) methods under linear function approximation. However, the analysis for learning the optimal Q-values under linear function approximation is more challenging. In particular, the invariant measure of the exploration policy, which is used for projection onto the span of basis functions, does not align with the measure induced by the greedy action selection policy.  Hence, the algorithm may not converge in general except for particular cases.  \cite{melo2008analysis} shows the convergence under a restrictive assumption which suggests that the exploration policy should be close to the greedy action selection policy. Another special case that guarantees convergence is the exact representation case. If the optimal Q value is perfectly linear, i.e. belongs to the span of the basis functions, then it can be learned exactly. In particular, the composition operator that involves the projection mapping and the Bellman operator coincides with the Bellman operator itself;  hence, this composition remains a contraction under the uniform norm. \cite{Ruszczy2024,jin2023provably} consider this exact representation case. Another special case is when the basis functions are orthonormal (e.g. discretization based approximation). In this case, the projection map is not only non-expansive in the $L_2$ norm but also in the uniform norm, and thus a convergence and error analysis can be made without restrictive conditions, see \cite{kara2023q}.  For general basis functions, Meyn \cite{meyn2024projected} recently showed that even though the composition (projection+Bellman) operator is not necessarily a contraction, it does admit at least one fixed point solution if the exploration policy is $\epsilon$-greedy. Furthermore, it is shown that the parameter iterations remain bounded almost surely.

Function approximation for POMDPs remains relatively less studied. \cite{cai2022reinforcement} is one of the few that studies this topic. The authors assume that the transition kernel and the observation channel admit densities and that the densities are exactly realizable by the basis functions. They consider finite-memory variables for the learning.  A restrictive assumption on the observation model, which ensures the invertibility of the observation distributions, guarantees that the Bellman mapping for the finite-memory variables can be parametrized as well. This observability condition implies that for any distribution on the observations, one can fully recover the distribution on the hidden state variable. 

In this paper, we consider linear function approximation for functions of finite-memory variables. We do not assume exact realizability conditions. We first study policy evaluation for finite-memory feedback policies as well as  learning of optimal Q-values based on the finite-memory variables. The results are parallel to the fully observed counterparts. For policy evaluation, we show that the convergence holds under ergodicity assumptions on the finite-memory variables. We provide upper bounds on the error of the learned value, building on the finite-memory approximation framework
developed in \cite{kara2020near,kara2023convergence}. For optimal Q-value learning, we impose an additional assumption similar to the analysis in \cite{melo2008analysis}. Finally, for discretization-based basis functions, we show the convergence and the error analysis of \cite{devran2025near} apply under less restrictive assumptions on the model and the exploration policies.

\subsection{Partially Observed Markov Decision Processes}
Let $\mathds{X} \subset \mathds{R}^m$ denote a Borel set which is the state space of a POMDP for some $m\in\mathds{N}$. Let
$\mathds{Y} \subset \mathds{R}^n$ be another Borel set denoting the observation space of the model, and let the state be observed through an
observation channel $O$. The observation channel, $O$, is defined as a stochastic kernel (regular
conditional probability) from  $\mathds{X}$ to $\mathds{Y}$, such that
$O(\,\cdot\,|x)$ is a probability measure on the sigma algebra $\mathcal{B}(\mathds{Y})$ of $\mathds{Y}$ for every $x
\in \mathds{X}$, and $O(A|\,\cdot\,): \mathds{X}\to [0,1]$ is a Borel
measurable function for every $A \in \mathcal{B}(\mathds{Y})$.
$\mathds{U}\in \mathds{R}^l$ denotes the action space. An {\em admissible policy} $\gamma$ is a
sequence of control functions $\{\gamma_t,\, t\in \Zplus\}$ such
that $\gamma_t$ is measurable with respect to the $\sigma$-algebra
generated by the information variables
$
I_t=\{Y_{[0,t]},U_{[0,t-1]}\}, \quad t \in \mathds{N}, \quad
  \quad I_0=\{Y_0\},
$
where $U_t=\gamma_t(I_t),\quad t\in \Zplus$,
are the $\mathds{U}$-valued control
actions and 
$Y_{[0,t]} = \{Y_s,\, 0 \leq s \leq t \}, \quad U_{[0,t-1]} =
  \{U_s, \, 0 \leq s \leq t-1 \}.$ We define $\Gamma$ to be the set of all such admissible policies. The update rules of the system are determined by
relationships:
\[  \Pr\bigl( (X_0,Y_0)\in B \bigr) =  \int_B \mu(dx_0)O(dy_0|x_0), \quad B\in \mathcal{B}(\mathds{X}\times\mathds{Y}), \]
where $\mu$ is the (prior) distribution of the initial state $X_0$, and
\begin{eqnarray*}
\label{eq_evol}
 &\Pr\biggl( (X_t,Y_t)\in B \, \bigg|\, (X,Y,U)_{[0,t-1]}=(x,y,u)_{[0,t-1]} \biggr)\\
& = \int_B \mathcal{T}(dx_t|x_{t-1}, u_{t-1})O(dy_t|x_t),  
\end{eqnarray*}
$B\in \mathcal{B}(\mathds{X}\times\mathds{Y}), t\in \mathds{N},$ where $\mathcal{T}$ is the transition kernel of the model which is a stochastic kernel from $\mathds{X}\times
\mathds{U}$ to $\mathds{X}$.  We let the objective of the agent (decision maker) be the minimization of the infinite horizon discounted cost, 
  \begin{align}\label{criterion1}
    J_{\beta}(\mu,\gamma)= E_\mu^{\gamma}\left[\sum_{t=0}^{\infty} \beta^t c(X_t,U_t)\right]
  \end{align}
 \noindent for some discount factor $\beta \in (0,1)$, over the set of admissible policies $\gamma\in\Gamma$, where $c:\mathds{X}\times\mathds{U}\to\R$ is a Borel-measurable stage-wise cost function and $E_\mu^{\gamma}$ denotes the expectation with initial state probability measure $\mu$ and transition kernel $\mathcal{T}$ and the channel $O$ under policy $\gamma$. Note that $\mu\in\mathcal{P}(\mathds{X})$, where we let $\mathcal{P}(\mathds{X})$ denote the set of probability measures on $\mathds{X}$. We define the optimal cost for the discounted infinite horizon setup as a function of the priors as
\begin{align}\label{opt_val}
  J_{\beta}^*(\mu)&=\inf_{\gamma\in\Gamma} J_{\beta}(\mu,\gamma).
\end{align}
For the analysis of partially observed MDPs, a common approach is to reformulate the problem as a fully observed MDP where the decision maker keeps track of the posterior distribution of the state $X_t$ given the available history $I_t$, also called the belief MDP. In what follows, we will use an alternative yet related reformulation based on finite-memory (window) information variables.

\section{Reduction to Fully Observed Using finite-memory Variables}\label{finite_memory}
 The following construction  is mostly taken from \cite{kara2023convergence}, however, we present the method in detail for completeness.
\subsection{Finite-memory belief-MDP reduction}\label{alt_sec}
 We construct a fully observed MDP reduction using the predictor from $N$ stages earlier and the most recent $N$ information variables (that is, measurements and actions). 
Consider the following state variable at time $t$:
\begin{align}\label{finite_belief_state}
{z}_t=(\mu_{t-N},h_t^N)
\end{align}
where, for $N\geq 1$
\begin{align*}
\mu_{t-N}&=Pr(X_{t-N}\in \cdot|y_{t-N-1},\dots,y_0,u_{t-N-1},\dots,u_0),\\
h_t^N&=\{y_t,\dots,y_{t-N},u_{t-1},\dots,u_{t-N}\}
\end{align*}
and $h_t^N=y_t$ for $N=0$
with $\mu$ being the prior probability measure on $X_0$. The state space with this representation is ${\mathcal{Z}}=\P(\mathds{X})\times \mathds{Y}^{N+1}\times \mathds{U}^{N}$ where we equip ${\mathcal{Z}}$ with the product topology where we consider the weak convergence topology on the $\P(\mathds{X})$ and the usual (coordinate) topologies on $\mathds{Y}^{N+1}\times \mathds{U}^{N}$. 

We can now define the stage-wise cost function and the transition probabilities. Consider the new cost function $\hat{c}:{\mathcal{Z}}\times \mathds{U}\to \mathds{R}$,  
\begin{align}\label{hat_cost}
&\hat{c}({z}_t,u_t)=\hat{c}(\mu_{t-N},h_t^N,u_t)\\
&=\int_\mathds{X}c(x_t,u_t)P^{\mu_{t-N}}(dx_t|y_t,\dots,y_{t-N},u_{t-1},\dots,u_{t-N}).\nonumber
\end{align}
Furthermore, we can define the transition probabilities for $N=1$ (for simplicity) as follows: for some $A\in \B(\mathcal{Z})$ such that 
\[\ A= B\times\{\hat{y}_{t-N+1},\hat{u}_{t},\dots,\hat{u}_{t-N+1}\},\quad  B\in\B(\P(\mathds{X})) \]
we write
\begin{align*}
&Pr({z}_{t+1}\in A|{z}_{t},\dots,{z}_0,u_{t},\dots,u_0) \\
&=Pr(\mu_t\in B,\hat{y}_{t+1},\hat{y}_{t},\hat{u}_{t}|\mu_{[t-1,0]},y_{[t,0]},u_{[t,0]})\\
&=\mathds{1}_{\{y_{t},u_{t}=\hat{y}_{t},\hat{u}_{t},G(\mu_{t-1},y_{t-1},u_{t-1})\in B\}}\\
&\qquad\qquad P^{\mu_{t-1}}(\hat{y}_{t+1}|y_t,y_{t-1},u_{t},u_{t-1})\\
&=Pr(\mu_{t}\in B,\hat{y}_{t+1},\hat{y}_{t},\hat{u}_{t}|\mu_{t-1},y_t,y_{t-1},u_t,u_{t-1})\\
&=Pr({z}_{t+1}\in A|{z}_t,u_t)=:\int_A{\eta}(d{z}_{t+1}|{z}_t,u_t)
\end{align*}
where the map $G$ is defined as 
\begin{align*}
&G(\mu_{t-1},y_{t-1},u_{t-1})=P^\mu(X_{t}\in\cdot|y_{t-1},\dots,y_0,u_{t-1},\dots,u_0).
\end{align*}
For some admissible policy $\gamma$, and some initial state $z_0\in\mathcal{Z}$ we write its induced cost as 
\begin{align*}
\hat{J}_\beta(z_0,\gamma) = \sum_{t=0}^\infty \beta^t E^\gamma[\hat{c}(Z_t,U_t)].
\end{align*}
Respectively, we denote the optimal value function by $\hat{J}^*_\beta(z_0)$. Note that this construction is without loss of optimality. In particular, for a fixed $\mu_{-N}$, assuming some arbitrary policy $\gamma$ acts from time $-N$ through $-1$, one can then show that 
\begin{align*}
E\left[ \hat{J}_\beta^*(Z_0)\right] = E\left[ \hat{J}_\beta^*(\mu_{-N} , H_0)\right] = E[J_\beta^*(\mu_0)]
\end{align*}
where the expectation on the left is with respect to $H_0=\{Y_0,\dots,Y_{-N},U_{-1},\dots, U_{-N} \}$, and on the right with respect to $\mu_{0}=Pr(X_{0}\in \cdot|Y_{-1},\dots,Y_{-N},U_{-1},\dots,U_{-N})$. Note that $J_\beta^*(\mu_0)$ is the optimal value function defined in (\ref{opt_val}).

Hence, we have a fully observed MDP, with the cost function $\hat{c}$, transition kernel ${\eta}$ and the state space ${\mathcal{Z}}$.

\subsection{Approximation of the finite-memory belief-MDP}\label{app_section}
The finite-memory belief MDP model constructed in the previous section lives in the state space
\begin{align*}
{\mathcal{Z}}=\bigg\{&\pi,y_{[0,N]},u_{[0,N-1]}:\\
&\pi\in\mathcal{P}(\mathds{X}), y_{[0,N]}\in{\mathds{Y}}^{N+1}, u_{[0,N-1]}\in\mathds{U}^N\bigg\},
\end{align*}
where the first coordinate summarizes the past information, and the second and the last coordinates carry the information from the most recent $N$ time steps.

Consider the following set ${\mathcal{Z}}_{\pi}$ for a fixed $\pi\in\P(\mathds{X})$
\begin{align*}
{\mathcal{Z}}_{\pi}=\bigg\{\pi,y_{[0,N]},u_{[0,N-1]}: y_{[0,N]}\in{\mathds{Y}}^{N+1}, u_{[0,N-1]}\in\mathds{U}^N\bigg\}
\end{align*}
such that the state at time $t$ is $\hat{z}_t=(\pi,h_t^N)$. Compared to the state ${z}_t=(\mu_{t-N},h_t^N)$ defined in (\ref{finite_belief_state}), this approximate model uses $\pi$ as the predictor, no matter what the real predictor at time $t-N$ is.

The cost function is defined as 
\begin{align}\label{cost_window}
&\hat{c}_\pi(\hat{z}_t,u_t)=\hat{c}(\pi,h_t^N,u_t)\\
&=\int_\mathds{X}c(x_t,u_t)P^{\pi}(dx_t|y_t,\dots,y_{t-N},u_{t-1},\dots,u_{t-N}).\nonumber
\end{align}
We define the controlled transition model by for some $\hat{z}_{t+1}=(\pi,h_{t+1}^N)$ and $\hat{z}_{t}=(\pi,h_{t}^N)$
\begin{align}\label{eta_N}
&{\eta}_\pi(\hat{z}_{t+1}|\hat{z}_t,u_t)={\eta}_\pi(\pi,h_{t+1}^N|\pi,h_t^N,u_t)\nonumber\\
&:={\eta}\bigg(\P(\mathds{X}),h_{t+1}^N|\pi,h_t^N,u_t\bigg).
\end{align}

For simplicity, if we assume $N=1$, then the transitions can be rewritten for some $h_{t+1}^N=(\hat{y}_{t+1},\hat{y}_t,\hat{u}_t)$ and $h_t^N=(y_t,y_{t-1},u_{t-1})$
\begin{align}\label{alt_eta_N}
&{\eta}_\pi(\pi,\hat{y}_{t+1},\hat{y}_t,\hat{u}_t|\pi,y_t,y_{t-1},u_{t-1},u_t)\nonumber\\
&={\eta}(\P(\mathds{X}),\hat{y}_{t+1},\hat{y}_t,\hat{u}_t|\pi,y_t,y_{t-1},u_{t-1},u_t)\nonumber\\
&=\mathds{1}_{\{y_t=\hat{y}_t,u_t=\hat{u}_t\}}P^{\pi}(\hat{y}_{t+1}|y_t,y_{t-1},u_t,u_{t-1}).
\end{align} 
In particular, if we have a function $f(h^N)$ that only depends on the finite-memory variables (and not on the predictor), we then have that
\begin{align}\label{consistent}
\int f(h^N_1)\eta_\pi(dh_1^N|h^N,u) = \int f(h_1^N)\eta(d\pi_1,dh_1^N|z,u)
\end{align}
where $z=(\pi,h^N)$ so that it coincides with the original dynamics.



We denote the optimal value function for the approximate model by $J_\beta^N$. 
Note that $J^N_\beta$ is defined on the set ${\mathcal{Z}}_{\pi}$. However, we can simply extend it to the set ${\mathcal{Z}}$ by defining it as constant over $\P(\mathds{X})$ for the first coordinate.  

We also note that since the predictor $\pi$ is fixed, $J_\beta^N$ can be thought as a function on $h_t^N$, the finite-memory information variables. In what follows, we sometimes directly write $J_\beta^N(h^N)$ for the value function of the approximate model defined in this section instead of writing $J_\beta^N(\pi,h^N)$.

We define the following constant:
\begin{align}\label{loss_constant}
L_t:=\sup_{\hat{\gamma}\in\hat{\Gamma}}E_{\mu_0}^{\hat{\gamma}}\bigg[\|P^{\mu_t}(X_{t+N}\in\cdot|Y_{[t,t+N]},U_{[t,t+N-1]})\nonumber\\
-P^{\pi}(X_{t+N}\in\cdot|Y_{[t,t+N]},U_{[t,t+N-1]})\|_{TV}\bigg]
\end{align} 
which is the expected value on the total variation distance between the posterior distributions of $X_{t+N}$ conditioned on the same observation and control action variables $Y_{[t,t+N]},U_{[t,t+N-1]}$ when the prior distributions of $X_{t}$ are given by $\mu_t$ and $\pi$. This filter stability term plays a significant role in the error analysis that follows. One can show that $L_t \to 0$ as $N\to 0$ (in some cases, exponentially fast) under certain assumptions. We refer the reader to \cite{kara2020near,kara2023convergence,mcdonald2020exponential} for further details on this analysis. 

\begin{prop}\cite[Theorem 3.3]{kara2023convergence}\label{cont_bound}
For ${z}_0=(\mu_0,h_0^N)$, with a policy $\hat{\gamma}$ acting on the first $N$ steps, we have that
\begin{itemize}
\item For a finite-memory policy (not necessarily optimal) $\gamma^N$
\begin{align*}
E_{\mu_0}^{\hat{\gamma}}\left[\left|J^N_\beta(h_0^N,\gamma^N) -J_\beta({z}_0,\gamma^N)\right|\right]\leq  \frac{\|c\|_\infty }{(1-\beta)}\sum_{t=0}^\infty\beta^tL_t
\end{align*}
\item For the difference between the value functions we have
\begin{align*}
&E_{\mu_0}^{\hat{\gamma}}\left[\left|J^N_\beta(h_0^N) -J^*_\beta({z}_0)\right|\right]\leq  \frac{\|c\|_\infty }{(1-\beta)}\sum_{t=0}^\infty\beta^tL_t
\end{align*}
where the expectation is with respect to the random realizations of the initial finite-memory variables $h_0^N$.
\end{itemize}
\end{prop}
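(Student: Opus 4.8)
The plan is to prove both bullets with one mechanism: write down the fixed-point (Bellman) equations of the true finite-memory belief MDP $(\hat c,\eta)$ and of the approximate one $(\hat c_\pi,\eta_\pi)$ for the relevant policy, subtract them, iterate the resulting one-step recursion, and bound every stage-$t$ discrepancy by the filter-stability constant $L_t$.

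For the first bullet, fix the finite-memory policy $\gamma^N$, set $W(z):=J_\beta(z,\gamma^N)$ and $V(h^N):=J^N_\beta(h^N,\gamma^N)$ (the latter extended to be constant in the predictor coordinate), and note $\|W\|_\infty,\|V\|_\infty\le\|c\|_\infty/(1-\beta)$. Both solve policy-evaluation equations for their kernels. Subtracting, at a state $z=(\mu_{t-N},h^N)$, and inserting $\pm\int V\,\bar\eta(\cdot\,|\,z,\gamma^N)$ where $\bar\eta$ denotes the $h^N$-marginal of $\eta$, gives for $D(z):=W(z)-V(h^N)$ a bound
\begin{align*}
|D(z)|\le \Delta_c(z)+\beta\|V\|_\infty\,\Delta_\eta(z)+\beta\int|D(z_1)|\,\eta(dz_1|z,\gamma^N),
\end{align*}
with $\Delta_c(z)=|\hat c(z,\gamma^N)-\hat c_\pi(h^N,\gamma^N)|$ and $\Delta_\eta(z)=\|\eta_\pi(\cdot|h^N,\gamma^N)-\bar\eta(\cdot|z,\gamma^N)\|_{TV}$. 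Unrolling this inequality expresses $|D(z_0)|$ as $\sum_{t\ge0}\beta^t$ times the conditional expectation, under the \emph{true} kernel $\eta$ started at $z_0$ and driven by $\gamma^N$, of $\Delta_c+\beta\|V\|_\infty\Delta_\eta$ at the time-$t$ state; taking the outer expectation $E^{\hat\gamma}_{\mu_0}$ then turns each stage-$t$ term into an expectation along the genuine POMDP trajectory. Two reductions finish the bullet: first $\Delta_c(z)=|\int c(x_t,\cdot)\,[P^{\mu_{t-N}}-P^\pi](dx_t|h^N)|\le\|c\|_\infty\,\|P^{\mu_{t-N}}(X_t\in\cdot|h^N)-P^\pi(X_t\in\cdot|h^N)\|_{TV}$; second, by the explicit form of $\eta,\eta_\pi$ (e.g. \eqref{alt_eta_N}), $\Delta_\eta(z)$ is the total-variation distance between the two one-step predictive laws of the next observation, which by the data-processing inequality (applying the channel $O$ and the kernel $\mathcal{T}$ to the two posteriors) is again bounded by the same posterior total-variation distance. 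Hence the stage-$t$ contribution is at most $(\|c\|_\infty+\beta\|V\|_\infty)=\|c\|_\infty/(1-\beta)$ times that posterior distance, whose expectation is $\le L_t$ by the very definition \eqref{loss_constant}, whose supremum over prefix policies absorbs both $\hat\gamma$ and $\gamma^N$; summing $\sum_t\beta^t\frac{\|c\|_\infty}{1-\beta}L_t$ gives the claim, the matching of ``stage $t$'' with $L_t$ being the $N$-step offset built into the reduction of Section~\ref{alt_sec}.

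For the second bullet the same recursion gives both inequalities by sandwiching. Plugging the optimal action $\gamma^*(z_0)$ of the true finite-memory MDP into the minimization in the approximate optimality equation $J^N_\beta=T_\pi J^N_\beta$ yields an upper bound which, compared with $\hat J^*_\beta(z_0)=\hat c(z_0,\gamma^*(z_0))+\beta\int\hat J^*_\beta\,d\eta$, produces exactly the one-step recursion above; iterating along the optimal trajectory of the true MDP — itself a legitimate competitor in the supremum defining $L_t$ — gives $J^N_\beta(h_0^N)-\hat J^*_\beta(z_0)\le\frac{\|c\|_\infty}{1-\beta}\sum_t\beta^tL_t$ in expectation. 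The reverse inequality is symmetric, now plugging the finite-memory optimizer of $T_\pi$ into the minimization of the true optimality operator $T$. The identity $E[\hat J^*_\beta(z_0)]=E[J^*_\beta(\mu_0)]$ from Section~\ref{alt_sec} then lets one read the bound against the genuine POMDP optimal value.

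I expect the main obstacle to be making the argument honest rather than any single estimate: the true and approximate models induce \emph{different} laws on the window process $\{h_t^N\}$, so the two value functions are not expectations under a common measure, and it is the telescoping of the Bellman equations — equivalently a one-step-at-a-time coupling — carried out with the \emph{true} kernel $\eta$ as the reference along which the unrolled sum is evaluated that circumvents this. The second delicate point is verifying that each stage-$t$ discrepancy, after taking expectations, is dominated by $L_t$ with the correct time index; here the supremum over prefix policies in \eqref{loss_constant} is essential, since the trajectories that arise — especially the predictor-dependent optimal trajectory in the second bullet — are driven by different policies, and only a bound uniform over such policies lets all of them be controlled by the single sequence $(L_t)_t$.
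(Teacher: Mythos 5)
Your mechanism is the same one underlying the cited source's argument (the paper itself gives no proof of Proposition \ref{cont_bound}; it is imported from \cite[Theorem 3.3]{kara2023convergence}): subtract the fixed-point equations of the true reduced model $(\hat c,\eta)$ and the approximate one $(\hat c_\pi,\eta_\pi)$, bound the per-stage cost and kernel mismatches by the total-variation distance between the two posteriors (the kernel mismatch via data processing through $\mathcal{T}$ and $O$), unroll along the \emph{true} kernel, and let the supremum over admissible policies in (\ref{loss_constant}) absorb the concatenated policy, giving the per-stage constant $\|c\|_\infty+\beta\|c\|_\infty/(1-\beta)=\|c\|_\infty/(1-\beta)$. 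For the first bullet this is complete and correct as you present it.

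In the second bullet there is a gap in the final assembly. As written, your sandwich produces $E\bigl[J^N_\beta(h_0^N)-\hat J^*_\beta(z_0)\bigr]\le C$ and, ``symmetrically,'' $E\bigl[\hat J^*_\beta(z_0)-J^N_\beta(h_0^N)\bigr]\le C$ with $C=\frac{\|c\|_\infty}{1-\beta}\sum_t\beta^t L_t$; two one-sided bounds in expectation only control $\bigl|E[\,\cdot\,]\bigr|$, not the claimed $E[\,|\cdot|\,]$, and adding them would double the constant. The repair is available from what you already have: both sandwich recursions are pointwise in $z_0$, i.e. $J^N_\beta-\hat J^*_\beta\le A(z_0)$ and $\hat J^*_\beta-J^N_\beta\le B(z_0)$ with $A,B\ge 0$ the unrolled sums along the two different policy-induced trajectories, hence $|J^N_\beta-\hat J^*_\beta|\le\max\{A,B\}$ pointwise; then observe that ``follow the true-model optimal selector or the approximate-model optimal selector according to which branch is active at $z_0$'' is itself a single admissible policy (the branching event is measurable with respect to the information available at time $0$), so each stage-$t$ expected TV discrepancy under this composite policy is still dominated by $L_t$ and $E[\max\{A,B\}]\le C$ follows without the factor $2$. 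Alternatively, and this is how the cited proof proceeds, keep the absolute value inside from the start by using $|\inf_u a(u)-\inf_u b(u)|\le\sup_u|a(u)-b(u)|$ in the one-step recursion, with the per-stage maximizing actions again absorbed by the supremum over policies defining $L_t$. Your closing remark about needing uniformity over policies is exactly the right ingredient; the missing piece is carrying out this step rather than concluding from the two one-sided expectation bounds.
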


\section{Policy Evaluation} In this section, we propose a finite-memory policy evaluation method using linear function approximation. We denote the finite-memory variables by $h_t$ 
by omitting the dependence on $N$. We introduce a set of basis functions $\{\phi^i(h)\}_{i=1}^d$ where  $\phi^i(h):\mathds{Y}^N\times\mathds{U}^{N-1}\to \mathds{R}$. We let $\mathds{H}:=\mathds{Y}^N\times\mathds{U}^{N-1}\to \mathds{R}$ and  we denote by ${\bf \Phi}^\intercal:= [\phi^1,\dots,\phi^d]$ the vector of the basis functions.
\begin{assumption}
We assume for the rest of the paper that $
\|\phi^i\|_\infty\leq 1$
for all $i=1,\dots,d$.
\end{assumption}

Let $\gamma(h):\mathds{H}\to \mathds{U}$ be a finite-memory feedback policy.  We assume that the unobserved state variable initiates at time $-N$, according to some $\mu_{-N}\in\P(\mathds{X})$, and the controller starts acting on time $t=0$. Recall the construction in Section \ref{alt_sec}; for some initial state variable $z_0=(\mu_{-N}, h_0)$.
In this section, our goal is to approximate the performance of the policy $\gamma$:
\begin{align*}
J_\beta(z_0,\gamma)= J_\beta(\mu_{-N},h_0,\gamma)\approx \theta^\intercal {\bf \Phi}(h_0)
\end{align*}
for some $\theta\in \mathds{R}^d$ from a single trajectory of data of observations and actions when the model is unknown.

\subsection{Ergodicity} In this part, we study the long run behavior of the process $\{h_t\}$. We note that this process is not a Markov chain. However, the joint process $(h_t,x_{t},u_t)$ is a Markov chain under a finite-memory policy $\gamma$. For example, for $N=2$ and for some $B_1, B_2 \in  \mathcal{B}(\mathds{Y}), B_3,B_4 \in \mathcal{B}(\mathds{U}), B_5 \in \mathcal{B}(\mathds{X}) $, denoting by $I_{t+1} = \{(y,x,u)_ {t+1},\dots,(y,x,u)_{0}\}$
\begin{align*}
&Pr({Y}_{t+2} \in B_1,{Y}_{t+1} \in B_2, U_{t+1} \in B_3, X_{t+2}\in B_5,\\
&\qquad\qquad\qquad\qquad\qquad\qquad\qquad\qquad   {U}_{t+2}\in B_4 | I_{t+1} )\\
&= \int_{x_{t+1}\in B_5}\int_{x_{t+2}\in\mathds{X}} \int_{y_{t+2}\in B_2} \int_{u_{t+2}\in B_4}\mathds{1}_{(y_{t+1} \in B_2, u_{t+1}\in B_3)} \\
&  \qquad\gamma(du_{t+2}|y_{t+2},y_{t+1},u_{t+1})O(dy_{t+2}|x_{t+2})\\
&\qquad\qquad\qquad  \mathcal{T}(dx_{t+2}|x_{t+1},u_{t+1})
\end{align*}
which shows that the joint process is a Markov chain. We will assume in the following analysis that under the finite-memory policy $\gamma$, this process is exponentially ergodic.
\begin{assumption}\label{erg_assmp}
Under the finite-memory policy $\gamma$, the process $(h_t,x_t,u_t)$ is exponentially ergodic.
\end{assumption}
We note that it is not possible to guarantee this assumption solely using the properties of the transition kernel $\mathcal{T}(\cdot|x,u)$  in general. This is due to the fact that the finite-memory variable $h_t$ contains the past control actions, and thus the dependence of the control policies on the past control actions makes the ergodicity analysis non-trivial.  For example, for a policy of type $u_t\sim\gamma(\cdot|u_{t-1})$, the ergodicity of the action process and thus the finite-memory process, clearly depends on the randomized policy $\gamma(\cdot|u_{t-1})$.

We  note that if the finite-memory policy $\gamma$ and the transition kernel $\mathcal{T}$ satisfy a minorization condition, then the augmented process is uniquely ergodic.
\begin{assumption}\label{minor}
We assume that there exist non-trivial measures $\lambda_x(\cdot)$ and  $\lambda_u(\cdot)$ such that 
\begin{align*}
\mathcal{T}(dx_1|x,u)&\geq\lambda_x(dx_1)\\
\gamma(du|h)&\geq \lambda_u(du)
\end{align*}
for all $(x,u)\in\mathds{X}\times\mathds{U}$ and for all $h\in \mathds{Y}^{N}\times \mathds{U^{N-1}}$.
\end{assumption}
\begin{lemma}
Assumption \ref{minor} implies Assumption \ref{erg_assmp}. In particular, under Assumption \ref{minor}, the augmented Markov chain $(h_t,x_{t},u_t)$ is exponentially ergodic under the finite-memory policy satisfying Assumption \ref{minor}. 
\end{lemma}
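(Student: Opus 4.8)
\emph{Proof strategy.} The plan is to show that the $N$-step transition kernel $P^N$ of the augmented (Markov) chain $(h_t,x_t,u_t)$ — where $P$ denotes its one-step kernel — satisfies a Doeblin-type uniform minorization, from which uniform (hence exponential) ergodicity with a geometric rate follows by the classical coupling argument. The idea is that Assumption \ref{minor} bounds below only the $\mathcal{T}$- and $\gamma$-factors, but running the chain for $N$ steps is exactly long enough to flush the initial memory window $h_0$; once this happens, the entire augmented state has a law bounded below by one fixed measure, uniformly over the starting point.

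\emph{Building the minorization.} First I would split the two kernels. Put $\epsilon_x := \lambda_x(\mathds{X}) \in (0,1]$, $\epsilon_u := \lambda_u(\mathds{U}) \in (0,1]$, $\bar\lambda_x := \lambda_x/\epsilon_x$, $\bar\lambda_u := \lambda_u/\epsilon_u$, and write $\mathcal{T}(\cdot|x,u) = \epsilon_x \bar\lambda_x + (1-\epsilon_x) R_x(\cdot|x,u)$ and $\gamma(\cdot|h) = \epsilon_u \bar\lambda_u + (1-\epsilon_u) R_u(\cdot|h)$ with $R_x, R_u$ the (normalized) residual stochastic kernels. I would then realize the chain with auxiliary i.i.d.\ $\sBer(\epsilon_x)$ variables $\xi_t$ and i.i.d.\ $\sBer(\epsilon_u)$ variables $\zeta_t$, all independent of everything else, declaring that $x_t$ is sampled from $\bar\lambda_x$ if $\xi_t = 1$ and from $R_x(\cdot|x_{t-1},u_{t-1})$ otherwise, and likewise $u_t$ from $\bar\lambda_u$ if $\zeta_t=1$ and from $R_u(\cdot|h_t)$ otherwise; either way the conditional law given the past is the correct one, so this is a faithful construction. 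Running $N$ steps from an arbitrary $(h_0,x_0,u_0)$ and conditioning on the event $E := \{\xi_1 = \cdots = \xi_N = 1\} \cap \{\zeta_1 = \cdots = \zeta_{N-1} = 1\}$, which has probability $\delta := \epsilon_x^N\epsilon_u^{N-1} > 0$ not depending on $(h_0,x_0,u_0)$, one checks that on $E$ the states $x_1,\dots,x_N$ are i.i.d.\ $\bar\lambda_x$, the observations satisfy $y_i \sim O(\cdot|x_i)$, and $u_1,\dots,u_{N-1}$ are i.i.d.\ $\bar\lambda_u$, all with laws that do not involve the initial condition. Since $h_N$ is a deterministic measurable image of $(y_1,\dots,y_N,u_1,\dots,u_{N-1})$ while $x_N \sim \bar\lambda_x$ and $u_N \sim \gamma(\cdot|h_N)$, the law of $(h_N,x_N,u_N)$ conditioned on $E$ is one fixed probability measure $\nu$ on $\mathds{H}\times\mathds{X}\times\mathds{U}$, so that $P^N\big((h_0,x_0,u_0),\cdot\big) \ge \delta\,\nu(\cdot)$ for every $(h_0,x_0,u_0)$.

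\emph{Conclusion and main difficulty.} From this uniform minorization of $P^N$ I would invoke the standard coupling bound to obtain a unique invariant probability measure $\pi$ with $\sup_{(h,x,u)}\|P^{Nk}((h,x,u),\cdot)-\pi\|_{TV} \le (1-\delta)^k$ for all $k$; writing a general time $n = Nk+r$ with $0 \le r < N$ and using that $P^r$ is a total-variation contraction then yields $\sup_{(h,x,u)}\|P^{n}((h,x,u),\cdot)-\pi\|_{TV} \le C\rho^{n}$ for some $C<\infty$ and $\rho\in(0,1)$, i.e.\ exponential ergodicity. The one step that needs care is the bookkeeping in the second paragraph: determining the exact number of steps after which the window $h_t$ no longer references any pre-$0$ observation or action, and verifying that, on the regeneration event, this window together with the current $(x,u)$ genuinely contains no information about $(h_0,x_0,u_0)$ — this is precisely where it matters that Assumption \ref{minor} imposes nothing on the observation channel $O$ and that the window coordinate enters only as a deterministic function of past observations and actions. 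A different indexing convention for $h$ only shifts the required number of steps by a fixed constant and does not affect the argument.
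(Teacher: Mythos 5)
Your proposal is correct and follows essentially the same route as the paper: both establish a uniform Doeblin-type minorization $P^N\bigl((h_0,x_0,u_0),\cdot\bigr)\ge \delta\,\nu(\cdot)$ for the $N$-step kernel of the augmented chain from Assumption \ref{minor} (the paper lower-bounds only the start-dependent factors $\mathcal{T}$ and $\gamma$ in the explicit $N$-step kernel, while you reach the same bound via a Bernoulli-splitting regeneration event with a smaller but still positive constant $\epsilon_x^N\epsilon_u^{N-1}$), and then deduce exponential ergodicity by the standard coupling argument, which the paper instead obtains by citing \cite[Lemma 3.3]{hernandez2012adaptive}. Your bookkeeping on when the window stops referencing the initial data is consistent with the paper's $N=2$ sketch, so there is no gap.
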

\begin{proof}
We give a sketch of the proof for $N=2$: consider the two step transition for the chain $(h_t,x_{t},u_t)$ for some starting point $(y_1,y_0,u_0,x_1,u_1)$:
\begin{align*}
&Pr(dy_3, dy_2,du_2,dx_3,du_3|y_1,y_0,u_0,x_1,u_1)\\
&=\int_{x_2\in\mathds{X}} \gamma(du_3|y_3,y_2,u_2)O(dy_3|x_3)\mathcal{T}(dx_3|x_2,u_2)\\
&\quad\gamma(du_2|y_2,y_1,u_1) O(dy_2|x_2) \mathcal{T}(dx_2|x_1,u_1)\\
&\geq \int_{x_2} \gamma(du_3|y_3,y_2,u_2)O(dy_3|x_3)\mathcal{T}(dx_3|x_2,u_2)\\
&\quad\lambda_u(du_2)O(dy_2|x_2) \lambda_x(dx_2)\\
&=:\lambda_h(du_3,dy_3, dy_2,du_2,dx_3)
\end{align*}
the non-trivial measure $\lambda_h(\cdot)$  is independent of the starting point, and thus it can be shown that $(h_t,x_{t},u_t)$ is exponentially ergodic (see e.g.  \cite[Lemma 3.3]{hernandez2012adaptive}.
\end{proof}
\begin{remark}
For any finite-memory policy $\gamma$ that does not satisfy Assumption \ref{minor}, one can always construct a perturbed version that does satisfy this assumption. In particular, let $\gamma'$ be an arbitrary policy that satisfies the minorizarion policy. Then, the perturbed policy $\hat{\gamma}(du|h)=(1-\epsilon)\gamma(du|h)+\epsilon \gamma'(du|h)$ satisfies Assumption \ref{minor} by construction.
\end{remark}

Let $\pi(\cdot)$ denote the invariant measure of the joint process $(h_t,x_{t},u_t)$.  We denote by $\pi_x(\cdot)$ the marginal of $\pi(\cdot)$ on $x_t$. We recall the following notation used earlier:
$
P^\mu(dx_t|h_t)
$
which denotes the Bayesian update of the distribution of $x_t$ conditioned on the finite-memory $h_t$, given that the prior measure on  $x_{t-N}$ is $\mu$. 

Through disintegration, the invariant measure $\pi(\cdot)$ on $(h_t,x_{t},u_t)$ induces a conditional distribution $Pr(dx_t|h_t)$. 
One can then show that this conditional distribution coincides with $P^{\pi_x}(dx_t|h_t)$ where the initial condition $x_{t-N}$ is distributed according to $\pi_x(\cdot)$, the marginal of the invariant measure $\pi(\cdot)$ on the $x$ variable.

\subsection{Projection and approximate Bellman mappings} In this section we introduce mappings that will be used in the upcoming analysis. Recall the invariant distribution $\pi$ of the process $(h_t,x_t,u_t)$; with an abuse of notation, we also denote its marginal on $h_t$ by $\pi$. We consider the $L_2$ space of real valued functions on $h\in \mathds{H}$ with the measure $\pi\in \P(\mathds{H})$ under the usual inner product. We denote by $\Pi^\pi$ the projection map from $ L_2(\pi,\mathds{H})$ onto the span of ${\bf \Phi}^\intercal:= [\phi^1,\dots,\phi^d]$. In particular, for some $f\in  L_2(\pi,\mathds{H})$, $\Pi^\pi(f) = \theta_f^\intercal {\bf \Phi} $ where 
\begin{align}\label{proj}
\theta_f =\argmin_{\theta\in\mathds{R}^d} \sqrt{\int_h \left| f(h) - \theta^\intercal {\bf \Phi}(h)\right|^2 \pi(dh)}.
\end{align}
We also define the following approximate Bellman operator for some finite window policy $\gamma$, using the cost function $\hat{c}_\pi(h,u)$ (see (\ref{cost_window})) and the transition model $\eta_\pi$ (see (\ref{eta_N})) such that for some $f\in L_2(\pi,\mathds{H})$, we write that
\begin{align}\label{appr_bell}
T^\gamma f(h) := \int_\mathds{U}\left(\hat{c}_\pi(h,u) + \beta \int f(h_1)\eta_\pi(dh_1|h,u)\right)\gamma(du|h).
\end{align}
\begin{prop}\label{comp_contr}
The mapping $\Pi^\pi T^\gamma$ is a contraction under the $L_2$ norm, and thus admits a unique fixed point.
\end{prop}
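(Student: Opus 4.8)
The plan is to follow the classical Tsitsiklis--Van Roy argument \cite{tsitsiklis1997analysis}: the orthogonal projection $\Pi^\pi$ is non-expansive on the Hilbert space $L_2(\pi,\mathds{H})$, so it is enough to show that $T^\gamma$ is itself a $\beta$-contraction in the $L_2(\pi)$ norm; the composition $\Pi^\pi T^\gamma$ then has Lipschitz modulus at most $\beta<1$, and Banach's fixed point theorem on the complete space $L_2(\pi,\mathds{H})$ yields existence and uniqueness of the fixed point. For the first ingredient I would simply record that $\Pi^\pi$ is the orthogonal projection onto the finite-dimensional — hence closed — subspace $\mathrm{span}\{\phi^1,\dots,\phi^d\}$ of $L_2(\pi,\mathds{H})$, so that $\|\Pi^\pi f-\Pi^\pi g\|_{L_2(\pi)}\le\|f-g\|_{L_2(\pi)}$ for all $f,g$.

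For the second ingredient I would write $T^\gamma f-T^\gamma g=\beta\,P^\gamma(f-g)$, where $P^\gamma\psi(h):=\int_\mathds{U}\int\psi(h_1)\,\eta_\pi(dh_1|h,u)\,\gamma(du|h)$ is the controlled transition operator associated with $\eta_\pi$ and $\gamma$ (the cost term in (\ref{appr_bell}) cancels since it does not depend on $f$). The heart of the matter is the estimate $\|P^\gamma\psi\|_{L_2(\pi)}\le\|\psi\|_{L_2(\pi)}$. Applying Jensen's inequality to the averaging operation $P^\gamma$ gives, pointwise in $h$, $|P^\gamma\psi(h)|^2\le\int_\mathds{U}\int|\psi(h_1)|^2\,\eta_\pi(dh_1|h,u)\,\gamma(du|h)$; integrating this against $\pi(dh)$ and using the invariance of $\pi$ under the one-step kernel $h\mapsto\int_\mathds{U}\eta_\pi(\cdot|h,u)\gamma(du|h)$ collapses the right-hand side to $\int|\psi|^2\,d\pi$. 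Chaining the two non-expansiveness bounds then yields $\|\Pi^\pi T^\gamma f-\Pi^\pi T^\gamma g\|_{L_2(\pi)}\le\beta\|f-g\|_{L_2(\pi)}$, which is the claim. Along the way one checks that $T^\gamma$ maps $L_2(\pi,\mathds{H})$ into itself, which is immediate from $\|\hat{c}_\pi\|_\infty\le\|c\|_\infty<\infty$ and the non-expansiveness of $P^\gamma$.

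The one step requiring genuine care — and the one I expect to be the main obstacle — is the invariance invoked above. The process $\{h_t\}$ is not Markov on its own, but $(h_t,x_t,u_t)$ is, and what is actually needed is that under the stationary law of this joint chain one has $\cE[\psi(h_{t+1})\,|\,h_t]=P^\gamma\psi(h_t)$ together with $\cE[\psi(h_{t+1})]=\int\psi\,d\pi$. The second identity is just stationarity of the $h$-marginal, which is $\pi$ by definition. The first follows from the consistency relation (\ref{consistent}) combined with the fact established in the previous subsection that the stationary conditional law of $x_t$ given $h_t$ coincides with $P^{\pi_x}(dx_t|h_t)$, so that the fixed-predictor kernel $\eta_\pi$ reproduces exactly the true one-step transition of $h_t$ in the stationary regime; feeding this back into the previous display gives invariance of $\pi$ under $h\mapsto\int_\mathds{U}\eta_\pi(\cdot|h,u)\gamma(du|h)$. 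Once this identification is in hand, the remaining ingredients (Jensen, Fubini, non-expansiveness of an orthogonal projection, Banach) are routine.
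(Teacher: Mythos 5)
Your proposal is correct and follows essentially the same route as the paper's proof: non-expansiveness of the orthogonal projection, Jensen's inequality, and the key step that averaging the fixed-predictor kernel $\eta_\pi$ against the stationary law of $h_t$ reproduces $\pi$, justified exactly as the paper does via the consistency relation (\ref{consistent}) together with the identification of the stationary conditional law of $x_t$ given $h_t$ with $P^{\pi_x}(dx_t|h_t)$. You correctly isolate this invariance as the only non-routine step, which is what the paper's explicit $N=2$ computation establishes.
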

\begin{proof}
For $f,g \in L_2(\pi,\mathds{H})$, we have that
\begin{align*}
\|\Pi^\pi T^\gamma (f) - \Pi^\pi T^\gamma (g)\|_2 \leq  \| T^\gamma (f) -  T^\gamma (g)\|_2 
\end{align*}
as the projection is non-expansive. Using the Jensen's inequality, we then have:
\begin{align*}
&\| T^\gamma (f) -  T^\gamma (g)\|_2\\
& \leq \beta \sqrt{   \int \left(f(h_1) - g(h_1)\right)^2\eta_\pi(dh_1|h,u)\gamma(du|h) \pi(dh) }.
\end{align*}
For any $F\in L_2(\pi,\mathds{H})$ we can write the following:
\begin{align*}
\int F(h_1)\eta_\pi(dh_1|h,u)\gamma(du|h) \pi(dh) = E\left[ {E}[F(H_1)|H,U] \right]
\end{align*}
where the outer expectation is with respect to the invariant measure under the true dynamics, and the inner expectation is with respect to the approximate  transition model $\eta_\pi$. However, from (\ref{consistent}), we know that the expectation of $F(h_1)$ under the approximate model is consistent with the true dynamics when the prior distribution over $X_0$ is given by $\pi$, i.e. the marginal of the invariant measure on the $X$ variable. Hence, we can write that
\begin{align*}
E\left[ {E}[F(H_1)|H,U] \right]= E[F(H)]=\int F(h)\pi(dh)
\end{align*}
where we use the fact that $\pi$ is the stationary measure under the policy $\gamma$. Finally, we write
\begin{align*}
&\| T^\gamma (f) -  T^\gamma (g)\|_2 \leq \beta \sqrt{   \int \left(f(h) - g(h)\right)^2\pi(dh) }
\end{align*}
which ends the proof.
To be more precise, we write the following for $N=2$:
\begin{align*}
&\int F(y_2,y_1,u_1)\gamma(du_1|y_1,y_0,u_0)P^\pi(y_2|y_1,y_0,u_0)\\
&\qquad\qquad\qquad\qquad\qquad\qquad\qquad\qquad\pi(dy_1,dy_0,du_0)\\
&=\int F(y_2,y_1,u_1)\gamma(du_1|y_1,y_0,u_0)O(dy_2|x_2)\\
&\qquad\qquad P^{X_0\sim \pi}(dx_2| y_1,y_0,u_0)\pi(dy_1,dy_0,du_0)\\
&=\int  F(y_2,y_1,u_1)O(dy_2|x_2)\mathcal{T}(dx_2|x_1,u_1)\\
&\qquad \gamma(du_1|y_1,y_0,u_0) P^{X_0\sim \pi}(dx_1| y_1,y_0,u_0)\pi(dy_1,dy_0,du_0)\\
&=\int  F(y_2,y_1,u_1)O(dy_2|x_2)\mathcal{T}(dx_2|x_1,u_1)\\
&\qquad\qquad \pi(du_1,dx_1,dy_1,dy_0,du_0)\\
&=\int  F(y_2,y_1,u_1)\pi(dy_2,dy_1,du_1).
\end{align*}
\end{proof}

\subsection{Convergence of the policy evaluation algorithm}  We consider the following iterations:
\begin{align}\label{main_iter}
\theta_{t+1}= \theta_t - \alpha_t {\bf \Phi}(h_t) \left[\theta_t^\intercal {\bf \Phi}(h_t) - c(X_t,U_t) - \beta \theta^\intercal_t{\bf \Phi}(h_{t+1}) \right]
\end{align}
where $\alpha_t$ represent the learning rates, and where we use a single trajectory of $\{X_t,U_t,Y_t\}_{t}$ under the policy $\gamma$.
\begin{theorem}\label{main_thm}
Under Assumption \ref{erg_assmp}, if the learning rates are such that $\sum_t\alpha_t = \infty$ and $\sum_t \alpha_t^2<\infty$, then the iterations in (\ref{main_iter}) converge to some $\theta^* \in\mathds{R}^d$. Denoting by $V(h):={\theta^*}^\intercal {\bf \Phi}(h)$, $V(h)$ is the fixed point of the mapping $\Pi^\pi T^\gamma $ where the mappings $\Pi^\pi$ and $T^\gamma$ are defined in (\ref{proj}) and (\ref{appr_bell}).
\end{theorem}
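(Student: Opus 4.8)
The plan is to treat the recursion (\ref{main_iter}) as a linear stochastic approximation scheme driven by Markovian noise and to analyze it through its associated mean-field ODE, following the template of Tsitsiklis and Van Roy's analysis of TD$(0)$ with linear function approximation \cite{tsitsiklis1997analysis}. The window process $\{h_t\}$ is not Markov, but the augmented process $\xi_t:=(h_t,x_t,u_t)$ is (as observed above Assumption \ref{erg_assmp}), and by Assumption \ref{erg_assmp} it is exponentially ergodic with a unique invariant law $\pi$; I keep writing $\pi$ for its marginal on $h$ and $\pi_x$ for its marginal on $x$. Rewriting the increment in (\ref{main_iter}) as $\theta_{t+1}=\theta_t+\alpha_t\left(b(\xi_t,\xi_{t+1})-A(\xi_t,\xi_{t+1})\theta_t\right)$ with $b(\xi_t,\xi_{t+1})={\bf\Phi}(h_t)c(x_t,u_t)$ and $A(\xi_t,\xi_{t+1})={\bf\Phi}(h_t)({\bf\Phi}(h_t)-\beta{\bf\Phi}(h_{t+1}))^\intercal$, the limiting ODE is $\dot\theta=\bar b-\bar A\theta$, where $\bar b:=E_\pi[b(\xi_0,\xi_1)]$ and $\bar A:=E_\pi[A(\xi_0,\xi_1)]$, the expectations being taken under the stationary law of the chain $\xi_t$.

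Next I would verify the hypotheses of a standard ODE-method convergence theorem for stochastic approximation with Markovian noise: $A$ and $b$ are bounded since $\|\phi^i\|_\infty\le 1$ and $\|c\|_\infty<\infty$; exponential ergodicity of $\xi_t$ supplies the solution to the Poisson equation needed to control the correlated, non-martingale noise; and the step sizes satisfy $\sum_t\alpha_t=\infty$, $\sum_t\alpha_t^2<\infty$. The crux is that $-\bar A$ is Hurwitz, which is the ODE counterpart of Proposition \ref{comp_contr}: for $\theta\in\mathds{R}^d$, writing $V_\theta:={\bf\Phi}^\intercal\theta$ and $\|V_\theta\|_\pi^2:=\int V_\theta(h)^2\,\pi(dh)$, stationarity gives $E_\pi[V_\theta(h_0)^2]=E_\pi[V_\theta(h_1)^2]=\|V_\theta\|_\pi^2$, so by Cauchy--Schwarz $\theta^\intercal\bar A\theta=\|V_\theta\|_\pi^2-\beta\,E_\pi[V_\theta(h_0)V_\theta(h_1)]\ge(1-\beta)\|V_\theta\|_\pi^2$, strictly positive for $\theta\neq 0$ provided the $\{\phi^i\}$ are linearly independent in $L_2(\pi,\mathds{H})$ (if not, one should phrase the conclusion in terms of the function $V_{\theta_t}$ rather than the vector $\theta_t$). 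Hence the ODE has the unique, globally asymptotically stable equilibrium $\theta^*=\bar A^{-1}\bar b$; the iterates $\{\theta_t\}$ are almost surely bounded (the boundedness again following from $-\bar A$ being Hurwitz); and $\theta_t\to\theta^*$ almost surely.

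Finally I would identify $\theta^*=\bar A^{-1}\bar b$ as the fixed point of $\Pi^\pi T^\gamma$. By definition of the $L_2(\pi)$-projection (\ref{proj}), for $V$ in the span of ${\bf\Phi}$ the equation $V=\Pi^\pi T^\gamma V$ is equivalent to the normal equations $E_\pi[{\bf\Phi}(h)(V(h)-T^\gamma V(h))]=0$. Writing $V={\bf\Phi}^\intercal\theta$ and expanding $T^\gamma$ via (\ref{appr_bell}): the cost term equals $E_\pi[{\bf\Phi}(h_0)c(X_0,U_0)]=\bar b$ because, under the invariant measure, the conditional law of $x_0$ given $(h_0,u_0)$ is $P^{\pi_x}(dx_0|h_0)$ (the disintegration property recalled in the ergodicity discussion above), which is exactly the kernel defining $\hat c_\pi$; and the continuation term equals $\beta\,E_\pi[{\bf\Phi}(h_0){\bf\Phi}(h_1)^\intercal]\theta$ by the consistency identity (\ref{consistent}) together with that same disintegration property — this is precisely the computation displayed at the end of the proof of Proposition \ref{comp_contr}. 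The normal equations therefore read $\bar A\theta=\bar b$, so $\theta^*=\bar A^{-1}\bar b$ and $V(h)=(\theta^*)^\intercal{\bf\Phi}(h)$ is the claimed fixed point.

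I expect the main obstacle to be the stochastic-approximation step rather than the algebra: because $\{h_t\}$ is not Markov one must run the analysis on the augmented chain $\xi_t$ and handle Markovian (not martingale-difference) noise through the Poisson-equation/averaging machinery, while also establishing the a priori boundedness of $\{\theta_t\}$ that legitimizes the ODE comparison; the identification of $\bar A$ with the operator $\Pi^\pi T^\gamma$ and the positivity $\theta^\intercal\bar A\theta>0$ are then direct consequences of the stationarity and consistency arguments already used for Proposition \ref{comp_contr}.
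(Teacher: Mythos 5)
Your argument is correct, and its overall architecture coincides with the paper's: rewrite (\ref{main_iter}) as a linear stochastic approximation on the augmented chain $(h_t,x_t,u_t)$ (which is Markov even though $h_t$ alone is not), invoke a Markov-noise convergence theorem (the paper uses Proposition \ref{vanroy_lem}, whose conditions 5--6 are exactly the Poisson-equation/averaging bounds you describe and follow from exponential ergodicity plus boundedness of ${\bf \Phi}$ and $c$), and identify the limit through the normal equations together with the consistency identity (\ref{consistent}) and the disintegration property $P^{\pi_x}(dx\,|\,h)$, which is also how the paper matches $\bar A\theta=\bar b$ with the fixed point of $\Pi^\pi T^\gamma$. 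Where you genuinely diverge is the key negative-definiteness (Hurwitz) step: the paper establishes $(\theta-\theta^*)^\intercal A(\theta-\theta^*)<0$ through Lemma \ref{helpful_lem}, i.e., via the first-order optimality conditions of the projection and the $L_2(\pi)$-contraction of $\Pi^\pi T^\gamma$ proved in Proposition \ref{comp_contr}, whereas you prove $\theta^\intercal\bar A\theta\ge(1-\beta)\|{\bf \Phi}^\intercal\theta\|_{L_2(\pi)}^2$ directly from stationarity of the marginals of $h_0$ and $h_1$ plus Cauchy--Schwarz. Your route is more elementary and has the conceptual advantage of isolating where the approximate model $(\hat c_\pi,\eta_\pi)$ actually matters: it is needed only to identify the limit as the fixed point of $\Pi^\pi T^\gamma$, not for convergence itself; note, though, that it silently uses the same stationarity facts that Proposition \ref{comp_contr} formalizes. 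The paper's detour through Lemma \ref{helpful_lem} buys a template that is reused verbatim in the Q-learning section, where the greedy minimum destroys the direct Cauchy--Schwarz argument and only the contraction-based reasoning (under Assumption \ref{rest_assmp}) survives. Two minor points: your positivity bound, like the paper's uniqueness claim for $\theta^*$, requires linear independence of the $\phi^i$ in $L_2(\pi,\mathds{H})$, which you correctly flag; and the a priori boundedness of $\{\theta_t\}$ you list as an obstacle is not a separate requirement once one applies Proposition \ref{vanroy_lem}, since that result delivers almost-sure convergence under its conditions 1--6 without a boundedness hypothesis.
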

In what follows we prove this result. We build on the analysis in \cite{tsitsiklis1997analysis} by adapting it to the partially observed case with finite-memory variables. We state the following crucial result taken from \cite[Theorem 2]{tsitsiklis1997analysis} which is adapted from \cite[Theorem 17, p. 239]{benveniste2012adaptive}:
\begin{prop}\label{vanroy_lem}
Consider the iterations given by:
\begin{equation*}
    \theta_{t+1} = \theta_t + \alpha_t \big( A(X_t) \theta_t + b(X_t) \big)
\end{equation*}
where
\begin{enumerate}
    \item the learning rates $\alpha_t$ is positive, non-increasing, and satisfies $\sum_{t=0}^{\infty} \alpha_t = \infty$ and $\sum_{t=0}^{\infty} \alpha_t^2 < \infty$;
    \item $X_t$ is a Markov process with a unique invariant distribution, and there exists a mapping $g$ from the states of the Markov process to the positive reals, satisfying the remaining conditions. Let ${E}_0[\cdot]$ stand for expectation with respect to this invariant distribution;
    \item $A(\cdot)$ and $b(\cdot)$ are matrix and vector valued functions, respectively, for which $A = {E}_0[A(X_t)]$ and $b = {E}_0[b(X_t)]$ are well defined and finite;
    \item the matrix $A$ is negative definite;
    \item there exist constants $C$ and $q$ such that for all $X$
    \begin{equation*}
        \sum_{t=0}^{\infty} \|E[A(X_t) \mid X_0 = X] - A\| \leq C(1 + g^q(X))
    \end{equation*}
    and
    \begin{equation*}
        \sum_{t=0}^{\infty} \|E[b(X_t) \mid X_0 = X] - b\| \leq C(1 + g^q(X));
    \end{equation*}
    \item for any $q > 1$ there exists a constant $\mu_q$ such that for all $X, t$
    \begin{equation*}
        E[g^q(X_t) \mid X_0 = X] \leq \mu_q (1 + g^q(X)).
    \end{equation*}
\end{enumerate}
Then, $\theta_t$ converges to $\theta^*$, with probability one, where $\theta^*$ is the unique vector that satisfies $A\theta^* + b = 0$.
\end{prop}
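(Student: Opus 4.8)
The plan is to prove this via the \emph{ODE method} for stochastic approximation under Markovian (correlated) noise, using a Poisson-equation decomposition of the noise together with a Lyapunov argument driven by the negative definiteness of $A$. First I would identify the limit: by condition~(4), $A$ is negative definite and hence invertible, so $\theta^\ast := -A^{-1}b$ is the unique root of $A\theta + b = 0$. The associated mean ODE is $\dot\theta = A\theta + b = A(\theta - \theta^\ast)$, for which $V(\theta) := \tfrac12\|\theta - \theta^\ast\|^2$ is a strict Lyapunov function, since
\begin{align*}
\dot V = (\theta - \theta^\ast)^\intercal A(\theta - \theta^\ast) = (\theta - \theta^\ast)^\intercal \tfrac{A + A^\intercal}{2}(\theta - \theta^\ast) \le -\kappa\|\theta - \theta^\ast\|^2
\end{align*}
for some $\kappa > 0$. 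Thus $\theta^\ast$ is globally exponentially stable, and the whole task reduces to showing that the noisy discrete iteration asymptotically tracks this ODE.

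Next I would split the iteration into its mean part and a noise part,
\begin{align*}
\theta_{t+1} = \theta_t + \alpha_t\big(A\theta_t + b\big) + \alpha_t\,\xi_t, \qquad \xi_t := \big(A(X_t) - A\big)\theta_t + \big(b(X_t) - b\big).
\end{align*}
Because $X_t$ is a Markov chain with memory, $\xi_t$ is \emph{not} a martingale difference, and this is precisely where conditions~(5)--(6) enter. I would introduce the solutions of the Poisson equation,
\begin{align*}
\hat A(X) := \sum_{t=0}^\infty \big(E[A(X_t)\mid X_0 = X] - A\big), \qquad \hat b(X) := \sum_{t=0}^\infty \big(E[b(X_t)\mid X_0 = X] - b\big),
\end{align*}
which converge and obey $\|\hat A(X)\| + \|\hat b(X)\| \le C'(1 + g^q(X))$ by condition~(5), and which by construction satisfy $\hat A - P\hat A = A(\cdot) - A$ and $\hat b - P\hat b = b(\cdot) - b$, where $P$ is the one-step transition operator of the chain. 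An Abel summation-by-parts using these solutions then rewrites the accumulated noise $\sum_s \alpha_s \xi_s$ as a martingale-difference sum plus boundary terms and a remainder involving the step-size increments $\alpha_s - \alpha_{s-1}$ and the squares $\alpha_s^2$.

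I would then show that each of these pieces is controlled. The martingale term has summable conditional quadratic variation, since $\sum_t \alpha_t^2 < \infty$ and condition~(6) bounds $E[g^{2q}(X_t)\mid X_0 = X]$ uniformly, so it converges almost surely by the martingale convergence theorem. The boundary and remainder terms vanish because $\{\alpha_t\}$ is non-increasing (so $\sum_t |\alpha_t - \alpha_{t-1}| = \alpha_0 < \infty$ by telescoping) with $\alpha_t \to 0$ and $\sum_t \alpha_t^2 < \infty$. With the noise thus neutralized, I would form the stochastic Lyapunov inequality
\begin{align*}
E\big[V(\theta_{t+1}) \mid \mathcal F_t\big] \le (1 - 2\kappa\alpha_t)\,V(\theta_t) + \alpha_t^2\,W_t + (\text{summable noise correction}),
\end{align*}
for some integrable $W_t$, first invoke a Robbins--Siegmund / supermartingale argument to establish that $\{\theta_t\}$ is almost surely bounded, and then on that event conclude $V(\theta_t) \to 0$, i.e.\ $\theta_t \to \theta^\ast$ almost surely.

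The main obstacle is exactly the \emph{Markovian}, temporally correlated structure of $\xi_t$: unlike the classical i.i.d.\ or martingale-difference settings, one cannot apply martingale convergence directly. The Poisson-equation device is what converts the correlated noise into a martingale plus negligible remainders, and making that bookkeeping close --- in particular verifying that the boundary and cross terms are summable using only the non-increasing, square-summable step sizes, while keeping every moment finite through the uniform growth bounds (5)--(6) on the test function $g$ --- is the technical heart of the argument. Once this decomposition is in hand, the remaining steps are the standard stable-ODE Lyapunov analysis.
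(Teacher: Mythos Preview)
The paper does not prove this proposition: it is quoted verbatim as \cite[Theorem 2]{tsitsiklis1997analysis}, which in turn is an adaptation of \cite[Theorem 17, p.~239]{benveniste2012adaptive}, and no argument is given in the paper itself. Your sketch is therefore not competing with any proof in the paper; it is a reconstruction of the original Benveniste--M\'etivier--Priouret argument, and in outline it is the correct one: the ODE method with a quadratic Lyapunov function driven by the negative definiteness of $A$, together with a Poisson-equation decomposition of the Markovian noise into a martingale-difference sum plus telescoping and $O(\alpha_t^2)$ remainders, is exactly how the cited references establish the result.

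One point worth tightening if you actually write this out: the noise term $\xi_t$ contains $\theta_t$ linearly, so bounding the quadratic variation of the martingale part and the size of the Poisson-equation boundary terms presupposes control of $\sup_t\|\theta_t\|$, while your Robbins--Siegmund step for boundedness in turn relies on those same noise estimates. In the full Benveniste--M\'etivier--Priouret proof this apparent circularity is broken by carrying the factor $(1+\|\theta_t\|)$ explicitly through the decomposition and absorbing it into the supermartingale inequality itself, rather than first proving boundedness and then redoing the noise analysis. Apart from that bookkeeping, your plan matches the standard proof the paper is invoking.
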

Before the proof of Theorem \ref{main_thm}, we prove a lemma.
\begin{lemma}\label{helpful_lem}
\begin{align*}
(\theta-\theta^*)^\intercal E\left[ {\bf \Phi}(h)\left[ c(X,U)+\beta \theta^\intercal{\bf \Phi}(H_1) - \theta^\intercal{\bf \Phi}(H)\right] \right]<0
\end{align*}
for any $\theta\neq\theta^*$ where $\theta^*$ corresponds to the fixed point of the operator $\Pi^\pi T^\gamma$, that is ${\theta^*}^\intercal{\bf \Phi}(h)$, where $\theta^*$ is unique since $\Phi^i$'s are assumed to be linearly independent. Furthermore, the expectation is with respect to the unique invariant measure of $(h_t,x_t,u_t)$ under the policy $\gamma$.
\end{lemma}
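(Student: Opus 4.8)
The plan is to recognize this inequality as the negative-definiteness condition (item~4 of Proposition~\ref{vanroy_lem}) that will be needed to apply that proposition in the proof of Theorem~\ref{main_thm}, and to establish it by the classical argument of \cite{tsitsiklis1997analysis} once the expectation has been re-expressed through the operators $\Pi^\pi$ and $T^\gamma$. Write $\bar V_\theta := \theta^\intercal {\bf \Phi}$ and $V^* := {\theta^*}^\intercal {\bf \Phi}$, so that $V^* = \Pi^\pi T^\gamma V^*$ by the definition of $\theta^*$. The first step is to rewrite the expectation by conditioning on $H=h$. Using the disintegration property recorded above --- that under the invariant measure the conditional law $Pr(dx_t\,|\,h_t)$ equals $P^{\pi_x}(dx_t\,|\,h_t)$ --- one gets $E[c(X,U)\,|\,H=h]=\int_{\mathds{U}}\hat{c}_\pi(h,u)\,\gamma(du|h)$, while the consistency identity (\ref{consistent}) (equivalently, the final display in the proof of Proposition~\ref{comp_contr}), applied to each $\phi^i$ (which depends on the finite-memory variable only), gives $E[{\bf \Phi}(H_1)\,|\,H=h,U=u]=\int {\bf \Phi}(h_1)\,\eta_\pi(dh_1|h,u)$. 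Combining these with the tower rule and the definition (\ref{appr_bell}) of $T^\gamma$ yields
\[
E\!\left[{\bf \Phi}(H)\bigl(c(X,U)+\beta\,\theta^\intercal{\bf \Phi}(H_1)-\theta^\intercal{\bf \Phi}(H)\bigr)\right]=\int_{\mathds{H}}{\bf \Phi}(h)\bigl(T^\gamma\bar V_\theta(h)-\bar V_\theta(h)\bigr)\,\pi(dh).
\]

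Left-multiplying by $(\theta-\theta^*)^\intercal$ and using $\sum_i(\theta_i-\theta^*_i)\phi^i=\bar V_\theta-V^*$, the left-hand side of the claimed inequality becomes the $L_2(\pi)$ inner product $\langle \bar V_\theta-V^*,\, T^\gamma\bar V_\theta-\bar V_\theta\rangle_\pi$. Since $\bar V_\theta-V^*$ lies in the span of ${\bf \Phi}$ and $\Pi^\pi$ is the $L_2(\pi)$-orthogonal projection onto that span (hence self-adjoint and idempotent), while $\Pi^\pi\bar V_\theta=\bar V_\theta$, this equals $\langle \bar V_\theta-V^*,\, \Pi^\pi T^\gamma\bar V_\theta-\bar V_\theta\rangle_\pi$. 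Inserting $V^*=\Pi^\pi T^\gamma V^*$ and writing $\Pi^\pi T^\gamma\bar V_\theta-\bar V_\theta=\bigl(\Pi^\pi T^\gamma\bar V_\theta-\Pi^\pi T^\gamma V^*\bigr)-\bigl(\bar V_\theta-V^*\bigr)$, the inner product becomes $\langle \bar V_\theta-V^*,\, \Pi^\pi T^\gamma\bar V_\theta-\Pi^\pi T^\gamma V^*\rangle_\pi-\|\bar V_\theta-V^*\|_2^2$. By Cauchy--Schwarz together with the contraction bound of Proposition~\ref{comp_contr} (with modulus $\beta$), the first term is at most $\beta\,\|\bar V_\theta-V^*\|_2^2$, so the whole expression is at most $-(1-\beta)\|\bar V_\theta-V^*\|_2^2$, which is strictly negative whenever $\bar V_\theta\neq V^*$; since the $\phi^i$ are linearly independent, $\theta\neq\theta^*$ indeed implies $\bar V_\theta\neq V^*$, which is exactly the claim.

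The main obstacle is the first step: the rest is the standard Tsitsiklis--Van Roy negative-definiteness computation, but identifying the conditional expectations taken under the \emph{true} invariant dynamics of $(h_t,x_t,u_t)$ with the ingredients $\hat{c}_\pi$ and $\eta_\pi$ of the \emph{approximate} finite-memory model is precisely where partial observability enters. It requires chaining the disintegration of the invariant measure $\pi$ with the consistency property (\ref{consistent}), and it is essential that the fixed predictor $\pi$ appearing in $\hat{c}_\pi$ and $\eta_\pi$ be taken to be exactly the $X$-marginal $\pi_x$ of the joint invariant measure; any other choice would break the identity in the first display above and the argument would not close.
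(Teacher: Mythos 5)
Your proof is correct and follows essentially the same route as the paper's: identify the expected TD residual under the invariant measure with the $T^\gamma$-residual via the disintegration/consistency property, use orthogonality of the projection to replace $T^\gamma\bar V_\theta$ by $\Pi^\pi T^\gamma\bar V_\theta$, then add and subtract the fixed point and apply Cauchy--Schwarz with the contraction modulus $\beta$ from Proposition~\ref{comp_contr} to obtain the bound $-(1-\beta)\|\bar V_\theta-V^*\|_2^2<0$. The only differences are presentational: the paper phrases the orthogonality step through the first-order conditions of the least-squares projection and leaves the conditioning-on-$H$ identification largely implicit, whereas you make both explicit via self-adjointness of $\Pi^\pi$ and the consistency identity~(\ref{consistent}).
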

\begin{proof}
For notation convenience, we denote by $T_\theta(X,U,H_1):= c(X,U)+\beta \theta^\intercal{\bf \Phi}(H_1)$. We further use the notation $\Pi^\pi(T_\theta(X,U,H_1))$ for $\theta^\intercal {\bf \Phi}(H)$ that minimizes $E[\left|T_\theta(X,U,H_1) - \theta^\intercal{\bf \Phi}(H)\right|^2]$. Note that it is equivalent to minimize 
\begin{align*}
&E\left[\left| \hat{c}_\pi(H,U) + \beta \int \theta^\intercal {\bf \Phi}(h_1)  \eta_\pi(dh_1|H,U)- \theta^\intercal{\bf \Phi}(H)\right|^2\right]\\
&=E\left[\left| T^\gamma ( \theta^\intercal{\bf \Phi}(H)  )- \theta^\intercal{\bf \Phi}(H)\right|^2\right].
\end{align*}
Thus, we have that $\Pi^\pi(T_\theta(X,U,H_1))=\Pi^\pi T^\gamma(\theta^\intercal{\bf \Phi}(H)  )$. 

The first order conditions imply that $E[{\bf \Phi}(H)[ T_\theta(X,U,H_1) - \Pi^\pi(T_\theta(X,U,H_1)) ] ]=0$. Then,  by adding and subtracting $\Pi^\pi(T_\theta(X,U,H_1))$:
\begin{align*}
&(\theta-\theta^*)^\intercal E[ {\bf \Phi}(H)[ \left(T_\theta(X,U,H_1) - \Pi^\pi(T_\theta(X,U,H_1))\right)\\
&\qquad\qquad\qquad +\left(   \Pi^\pi(T_\theta(X,U,H_1))-\theta^\intercal{\bf \Phi}(H)\right)] ]\\
& = (\theta-\theta^*)^\intercal E[ {\bf \Phi}(H)\left(   \Pi^\pi(T_\theta(X,U,H_1))-\theta^\intercal{\bf \Phi}(H)\right)] .
\end{align*}
In what follows, we use the equality $\Pi^\pi(T_\theta(X,U,H_1))=\Pi^\pi T^\gamma(\theta^\intercal{\bf \Phi}(H)  )$, and we add and subtract ${\theta^*}^\intercal{\bf \Phi}(H) =\Pi^\pi T^\gamma({\theta^*}^\intercal{\bf \Phi}(H)  )$ to use the contraction property of the composition operator $\Pi^\pi T^\gamma$ (see Proposition \ref{comp_contr}):
\begin{align*}
&(\theta-\theta^*)^\intercal E[ {\bf \Phi}(H)\left(   \Pi^\pi(T_\theta(X,U,H_1))-\theta^\intercal{\bf \Phi}(H)\right)] \\
&= (\theta-\theta^*)^\intercal E[ {\bf \Phi}(H) (\Pi^\pi T^\gamma(\theta^\intercal{\bf \Phi}(H))-  {\theta^*}^\intercal{\bf \Phi}(H)  ) ]\\
& +  (\theta-\theta^*)^\intercal E[ {\bf \Phi}(H) ( {\theta^*}^\intercal{\bf \Phi}(H) -  \theta^\intercal{\bf \Phi}(H)  )]\\
&\leq  \| (\theta-\theta^*)^\intercal  {\bf \Phi}(h)\|_2 \| \Pi^\pi T^\gamma(\theta^\intercal{\bf \Phi}(h))-  {\theta^*}^\intercal{\bf \Phi}(h)\|_2\\
& - \| (\theta-\theta^*)^\intercal  {\bf \Phi}(h)\|^2_2\\
& \leq  (\beta-1) \| (\theta-\theta^*)^\intercal  {\bf \Phi}(h)\|^2_2 <0
\end{align*}
where we used the Cauchy-Schwarz inequality, and the $L_2$ norm is with respect to the invariant measure $\pi$. The last step follows from the uniqueness of $\theta^*$.
\end{proof}

\begin{proof}[Proof of Theorem \ref{main_thm}]
We use Proposition \ref{vanroy_lem} with 
\begin{align*}
&A(H_t,H_{t+1})= \beta{\bf \Phi}(H_t){\bf \Phi}^\intercal(H_{t+1})- {\bf \Phi}(H_t){\bf \Phi}^\intercal(H_{t})\\
&b(H_t,X_t,U_t)={\bf \Phi}(H_t)c(X_t,U_t).
\end{align*}
The matrices $A$ and $b$ are defined under the invariant measure $\pi$ of the  joint process $(h_t,x_t,u_t)$. 
 The assumptions 2,3,5,6 follow from the ergodicity assumption on the joint process $(h_t,x_t,u_t)$. For assumption 5, we write the following for some $\theta\in\mathds{R}^d$ and for $\theta^*$ which corresponds to the fixed point of $\Pi^\pi T^\gamma$:
 \begin{align*}
 &A (\theta-\theta^*)=E\left[\beta {\bf \Phi}(H){\bf \Phi}^\intercal(H_{1})- {\bf \Phi}(H){\bf \Phi}^\intercal(H)  \right] (\theta-\theta^*)\\
 &= E\left[{\bf \Phi}(h)\left( c(X,U)+\beta {\bf \Phi}^\intercal(H_{1})\theta- {\bf \Phi}^\intercal(H)\theta \right) \right]\\
 & \quad - E\left[{\bf \Phi}(H)\left( c(X,U)+\beta {\bf \Phi}^\intercal(H_{1})\theta^*- {\bf \Phi}^\intercal(H)\theta^* \right) \right]\\
 &=E\left[{\bf \Phi}(H)\left( c(X,U)+\beta {\bf \Phi}^\intercal(H_{1})\theta- {\bf \Phi}^\intercal(H)\theta \right) \right]
 \end{align*}
 where the last step follows from the fact that ${\bf \Phi}^\intercal(H)\theta^*$ is the fixed point of the operator $\Pi^\pi T^\gamma$. Moreover, it is also the closest point on the span of ${\bf \Phi}$ to  $c(X,U)+\beta {\bf \Phi}^\intercal(H_{1})\theta^*$ in $L_2$ under the invariant measure $\pi$ of $(h,x,u)$, and thus the error term is orthogonal to the span of ${\bf \Phi}$ functions.  Together with Lemma \ref{helpful_lem}, this show that 
 \begin{align*}
 (\theta-\theta^*)A(\theta-\theta^*)<0
 \end{align*}
 for all $\theta\neq\theta^*$, and thus $\theta_t$ converges to some $\theta'$ that satisfies $A\theta'+b=0$, which implies that
 \begin{align*}
 E\left[{\bf \Phi}(H)\left( c(X,U)+\beta {\bf \Phi}^\intercal(H_{1})\theta'- {\bf \Phi}^\intercal(H)\theta' \right) \right]=0
 \end{align*}
 then as argued earlier, $\theta'$ also satisfies:
 \begin{align*}
 &E\bigg[{\bf \Phi}(H)\bigg( \hat{c}_\pi(H,U)+\beta \int{\bf \Phi}^\intercal(h_{1})\theta' \eta(dh_1|H,U)\\
 &\qquad\qquad\qquad- {\bf \Phi}^\intercal(h)\theta' \bigg) \bigg]=0
 \end{align*}
 which in turn implies that ${\bf \Phi}^\intercal(h)\theta' $ is the fixed point of the operator $\Pi^\pi T^\gamma$. Since the fixed point is unique, we have that $\theta'=\theta^*$ which completes the proof.
\end{proof}

\subsection{Error bounds for the learned model}
In the previous section, we observed that  using the iterations (\ref{main_iter}), one can learn the fixed point of the operator $\Pi^\pi T^\gamma$ (see (\ref{proj}) and (\ref{appr_bell})). In the following, we compare the learned value function ${\theta^*}^\intercal {\bf \Phi}(h)$ with the fixed point of the operator $T^\gamma$. We note that the fixed point of the operator $T^\gamma$ is the value function of the finite-memory policy $\gamma$ for the approximate model constructed in Section \ref{app_section} which we denote by $J_\beta^N(h,\gamma)$. However, this is not the value of the finite-memory policy in the original partially observed environment.
\begin{prop}\label{l2_bound}
Under the invariant measure $\pi$ of the joint process $(h_t,x_t,u_t)$ with the policy $\gamma$, we have that 
\begin{align*}
&\|J_\beta^N(h,\gamma) - {\theta^*}^\intercal {\bf \Phi}(h)\|_2\\
&\qquad\qquad\qquad \leq\frac{1}{1-\beta}   \|J_\beta^N(h,\gamma) - \Pi^\pi (J_\beta^N(h,\gamma) ) \|_2
\end{align*}
\end{prop}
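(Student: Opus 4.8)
The plan is to run the classical projected-fixed-point error argument (Tsitsiklis--Van Roy) in the Hilbert space $L_2(\pi,\mathds{H})$. Write $J:=J_\beta^N(\cdot,\gamma)$ for the value function of the approximate model of Section \ref{app_section} and $V:={\theta^*}^\intercal{\bf \Phi}$ for the learned approximation. Two facts are invoked. First, $T^\gamma J = J$: the operator $T^\gamma$ in (\ref{appr_bell}) is exactly the policy-evaluation Bellman operator of the MDP with cost $\hat{c}_\pi$ and kernel $\eta_\pi$, so (since by Proposition \ref{comp_contr}'s argument $T^\gamma$ is itself a $\beta$-contraction on $L_2(\pi)$, hence has a unique fixed point) its fixed point is precisely $J_\beta^N(\cdot,\gamma)$. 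Second, $\Pi^\pi T^\gamma V = V$, which is the content of Theorem \ref{main_thm}. I would also first remark that $J,V\in L_2(\pi,\mathds{H})$, since $\|J\|_\infty\le\|c\|_\infty/(1-\beta)$ and $\|V\|_\infty\le d\,\|\theta^*\|_\infty$ are finite and $\pi$ is a probability measure, so all the norms below are finite.

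The core estimate is then short. By the triangle inequality in $L_2(\pi)$,
\[
\|J - V\|_2 \le \|J - \Pi^\pi J\|_2 + \|\Pi^\pi J - V\|_2 .
\]
For the second term, use $\Pi^\pi J = \Pi^\pi T^\gamma J$ (from $T^\gamma J = J$) together with $V = \Pi^\pi T^\gamma V$ to write $\Pi^\pi J - V = \Pi^\pi T^\gamma J - \Pi^\pi T^\gamma V$, and apply the contraction bound of Proposition \ref{comp_contr}:
\[
\|\Pi^\pi J - V\|_2 = \|\Pi^\pi T^\gamma J - \Pi^\pi T^\gamma V\|_2 \le \beta\,\|J - V\|_2 .
\]
Substituting and rearranging gives $(1-\beta)\|J-V\|_2 \le \|J - \Pi^\pi J\|_2$, which is the claim.

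I do not expect a real obstacle here; the only points needing a line of justification are the two fixed-point identifications above — in particular that $T^\gamma$ of (\ref{appr_bell}) is the Bellman evaluation operator of the approximate model, so its unique fixed point is $J_\beta^N(\cdot,\gamma)$ — and the integrability remark placing $J,V$ in $L_2(\pi,\mathds{H})$. One could sharpen the constant to $1/\sqrt{1-\beta^2}$ via the Pythagorean identity $\|J-V\|_2^2 = \|J-\Pi^\pi J\|_2^2 + \|\Pi^\pi J - V\|_2^2$ (valid since $J-\Pi^\pi J$ is orthogonal to the span of ${\bf \Phi}$, which contains $\Pi^\pi J - V$) combined with the same contraction bound, but the stated $1/(1-\beta)$ already follows from the cruder triangle-inequality version and that is all the subsequent error analysis needs.
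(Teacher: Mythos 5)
Your argument is correct and is essentially the paper's own proof: the same triangle-inequality decomposition (your intermediate point $\Pi^\pi J_\beta^N$ equals the paper's $\Pi^\pi T^\gamma J_\beta^N$ via the fixed-point identity $T^\gamma J_\beta^N = J_\beta^N$), the same use of the $L_2(\pi)$ contraction of $\Pi^\pi T^\gamma$ with fixed point ${\theta^*}^\intercal{\bf\Phi}$, and the same rearrangement to get the $1/(1-\beta)$ factor. Your added remarks (integrability of $J_\beta^N$ and ${\theta^*}^\intercal{\bf\Phi}$ in $L_2(\pi)$, and the sharper $1/\sqrt{1-\beta^2}$ constant via orthogonality) are fine but not needed for the stated bound.
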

\begin{proof}
We start with the following bound
\begin{align*}
&\|J_\beta^N(h,\gamma) - {\theta^*}^\intercal {\bf \Phi}(h)\|_2\leq \|J_\beta^N(h,\gamma) - \Pi^\pi T^\gamma(J_\beta^N(h,\gamma) ) \|_2\\
&\qquad\qquad \qquad +  \|\Pi^\pi T^\gamma(J_\beta^N(h,\gamma) )-  {\theta^*}^\intercal {\bf \Phi}(h)\|_2\\
&\leq  \|J_\beta^N(h,\gamma) - \Pi^\pi (J_\beta^N(h,\gamma) ) \|_2 + \beta \|J_\beta^N(h,\gamma) - {\theta^*}^\intercal {\bf \Phi}(h)\|_2
\end{align*}
For the first term, since $J_\beta^N(h,\gamma)$ is the fixed point of the operator $T^\gamma$ (under the uniform norm), we have that $ \Pi^\pi T^\gamma(J_\beta^N(h,\gamma) ) = \Pi^\pi J_\beta^N(h,\gamma)$. For the second term, we use the fact that ${\theta^*}^\intercal {\bf \Phi}(h)$ is the fixed point of $\Pi^\pi T^\gamma$ which is a contraction under the $L_2$ norm. Combining the terms concludes the proof.
\end{proof}
The upper bound is related the projection error of the value function $J_\beta^N(h,\gamma)$ onto the span of ${\bf \Phi}$ under the $L_2$ norm of  the stationary measure $\pi$ with the policy $\gamma$. However, in general, there is no guarantee that the initial distribution of the finite-memory variables is consistent with this stationary measure $\pi$. In the following, we derive an  upper bound on the uniform norm difference for near-linear value functions:
\begin{assumption}\label{near_lin}
We assume that there exists some $\hat{\theta}$ and some constant $\lambda<\infty$ such that 
\begin{align*}
\| J_\beta^N(h,\gamma) - \hat{\theta}^\intercal {\bf \Phi}(h)\|_\infty \leq \lambda. 
\end{align*}
\end{assumption}

\begin{prop}\label{uni_bound}
Under Assumption \ref{near_lin}, we have that 
\begin{align*}
&\|J_\beta^N(h,\gamma) - {\theta^*}^\intercal {\bf \Phi}(h)\|_\infty \leq \lambda \left(1+  \frac{2-\beta}{1-\beta}\sqrt{\frac{d}{\sigma_{\min}}}\right)
\end{align*}
where $\theta^*$ is the learned parameter with the iterations in (\ref{main_iter}). Furthermore,  $\sigma_{\min}$ is the minimum eigenvalue of the matrix $E[{\bf \Phi}(h) {\bf \Phi}^\intercal(h)  ] $ when $h$ is distributed with the invariant measure $\pi$.
\end{prop}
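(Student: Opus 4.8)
The plan is to compare ${\theta^*}^\intercal{\bf \Phi}$ with $J_\beta^N(\cdot,\gamma)$ by routing through the near-linear approximant $\hat\theta^\intercal{\bf \Phi}$ supplied by Assumption \ref{near_lin}, and to convert $L_2(\pi)$ estimates into uniform estimates using the boundedness of the basis functions together with the spectral gap $\sigma_{\min}$ of $E[{\bf \Phi}(h){\bf \Phi}^\intercal(h)]$.

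First I would apply the triangle inequality in the uniform norm,
\[
\|J_\beta^N(h,\gamma) - {\theta^*}^\intercal{\bf \Phi}(h)\|_\infty \le \|J_\beta^N(h,\gamma) - \hat\theta^\intercal{\bf \Phi}(h)\|_\infty + \|(\hat\theta-\theta^*)^\intercal{\bf \Phi}(h)\|_\infty,
\]
where the first summand is at most $\lambda$ by Assumption \ref{near_lin}. For the second summand, since $\|\phi^i\|_\infty\le 1$ for every $i$, Cauchy--Schwarz gives $\|(\hat\theta-\theta^*)^\intercal{\bf \Phi}\|_\infty \le \sum_{i=1}^d |\hat\theta_i-\theta^*_i| \le \sqrt{d}\,\|\hat\theta-\theta^*\|_2$, so it remains to bound the Euclidean distance $\|\hat\theta-\theta^*\|_2$.

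Next I would pass from the Euclidean distance to an $L_2(\pi)$ function-space distance. By the definition of $\sigma_{\min}$ as the smallest eigenvalue of $E[{\bf \Phi}(h){\bf \Phi}^\intercal(h)]$ under $\pi$, we have $\|(\hat\theta-\theta^*)^\intercal{\bf \Phi}\|_2^2 = (\hat\theta-\theta^*)^\intercal E[{\bf \Phi}(h){\bf \Phi}^\intercal(h)](\hat\theta-\theta^*) \ge \sigma_{\min}\,\|\hat\theta-\theta^*\|_2^2$ (with $\sigma_{\min}>0$ since the $\phi^i$ are linearly independent), hence $\|\hat\theta-\theta^*\|_2 \le \sigma_{\min}^{-1/2}\,\|(\hat\theta-\theta^*)^\intercal{\bf \Phi}\|_2$. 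A triangle inequality in $L_2(\pi)$ then yields $\|(\hat\theta-\theta^*)^\intercal{\bf \Phi}\|_2 \le \|\hat\theta^\intercal{\bf \Phi} - J_\beta^N(h,\gamma)\|_2 + \|J_\beta^N(h,\gamma) - {\theta^*}^\intercal{\bf \Phi}\|_2$; since $\pi$ is a probability measure, $\|\cdot\|_2 \le \|\cdot\|_\infty$, so the first term is at most $\lambda$, while for the second term I would invoke Proposition \ref{l2_bound} together with the fact that $\Pi^\pi J_\beta^N(h,\gamma)$ is the $L_2(\pi)$-closest element of the span of ${\bf \Phi}$ to $J_\beta^N(h,\gamma)$, so it is no farther than $\hat\theta^\intercal{\bf \Phi}$ is; this gives $\|J_\beta^N(h,\gamma) - {\theta^*}^\intercal{\bf \Phi}\|_2 \le (1-\beta)^{-1}\|J_\beta^N(h,\gamma) - \Pi^\pi J_\beta^N(h,\gamma)\|_2 \le (1-\beta)^{-1}\lambda$. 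Combining, $\|(\hat\theta-\theta^*)^\intercal{\bf \Phi}\|_2 \le \lambda\big(1 + (1-\beta)^{-1}\big) = \lambda\frac{2-\beta}{1-\beta}$, whence $\|(\hat\theta-\theta^*)^\intercal{\bf \Phi}\|_\infty \le \lambda\frac{2-\beta}{1-\beta}\sqrt{d/\sigma_{\min}}$, and adding back the $\lambda$ from the first split gives the claimed bound.

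There is no deep obstacle here; the argument is a chain of elementary inequalities. The one place to be careful is the sequence of norm conversions: tracking that $\|\cdot\|_{L_2(\pi)} \le \|\cdot\|_\infty$ because $\pi$ is a probability measure, that $\sigma_{\min}>0$ so the Euclidean-to-$L_2$ comparison is legitimate, and that optimality of $\Pi^\pi$ lets us replace the true projection error by the a priori approximation error $\lambda$ of Assumption \ref{near_lin}. The resulting bound is only as sharp as the weakest of these steps, and in particular degrades when $\sigma_{\min}$ is small (an ill-conditioned basis) or when $\beta$ is close to $1$.
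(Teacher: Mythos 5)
Your proof is correct and follows essentially the same route as the paper: the same split through $\hat\theta^\intercal{\bf \Phi}$, the same $L_2(\pi)$ triangle inequality combined with Proposition \ref{l2_bound} and the optimality of $\Pi^\pi$ to get the $\frac{2-\beta}{1-\beta}\lambda$ bound, and the same conversion to the uniform norm via $\sigma_{\min}$ and $\|{\bf \Phi}(h)\|_2\le\sqrt{d}$. The only difference is cosmetic ordering of the norm conversions, plus your explicit (and welcome) remark that $\sigma_{\min}>0$ requires linear independence of the basis functions.
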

\begin{proof}
We begin by adding and subtracting $\hat{\theta}^\intercal {\bf \Phi}(h)$:
\begin{align*}
&\|J_\beta^N(h,\gamma) - {\theta^*}^\intercal {\bf \Phi}(h)\|_\infty\\
& \leq \| J_\beta^N(h,\gamma) - \hat{\theta}^\intercal {\bf \Phi}(h)\|_\infty+ \| \hat{\theta}^\intercal {\bf \Phi}(h) -  {\theta^*}^\intercal {\bf \Phi}(h)\|_\infty.
\end{align*}
The first term is bounded by $\lambda$ by assumption. We analyze the second term under the $L_2$ norm:
\begin{align*}
 &\| \hat{\theta}^\intercal {\bf \Phi}(h) -  {\theta^*}^\intercal {\bf \Phi}(h)\|_2 \\
 &\leq   \| \hat{\theta}^\intercal {\bf \Phi}(h) - J_\beta^N(h,\gamma)\|_2 + \|J_\beta^N(h,\gamma) - {\theta^*}^\intercal {\bf \Phi}(h)\|_2\\
 &\leq \lambda + \frac{1}{1-\beta}   \|J_\beta^N(h,\gamma) - \Pi^\pi (J_\beta^N(h,\gamma) ) \|_2\\
 &\leq \frac{2-\beta}{1-\beta}\lambda.
\end{align*}
For the second inequality, we used Proposition \ref{l2_bound}. Furthermore, by Assumption \ref{near_lin}, the $L_2$ distance between $J_\beta^N(h,\gamma)$ and  $\hat{\theta}^\intercal {\bf \Phi}(h)$ is also bounded $\lambda$ as we work under probability measures. For the last inequality,  we use the fact that since $ \Pi^\pi (J_\beta^N(h,\gamma) ) $ is the projection of  $J_\beta^N(h,\gamma)$ under the $L_2$ norm of $\pi$, then it achieves the minimum $L_2$ distance to $J_\beta^N(h,\gamma)$, and thus it must achieve an error bound less than $\lambda$ that $\hat{\theta}$ achieves. 

On the other hand, we have that 
\begin{align*}
&\| \hat{\theta}^\intercal {\bf \Phi}(h) -  {\theta^*}^\intercal {\bf \Phi}(h)\|^2_2\\
&= (\theta^* - \hat{\theta}) E[{\bf \Phi}(h) {\bf \Phi}^\intercal(h)  ] (\theta^*-\hat{\theta}) \geq  \|\theta^* - \hat{\theta}\|^2_2 \sigma_{\min}
\end{align*}
where $\sigma_{\min}$ is the minimum eigenvalue of the matrix $E[{\bf \Phi}(h) {\bf \Phi}^\intercal(h)  ] $ when $h$ is distributed with the invariant measure $\pi$. Note that the $2$ norm for the $\theta$ vectors is the standard $2$ norm and not to be confused with the $L_2$ norm under $\pi$ over the functions. Combining what we have so far, we can write
\begin{align*}
 \|\theta^* - \hat{\theta}\|_2 \leq \frac{2-\beta}{1-\beta}\frac{\lambda}{\sqrt{\sigma_{\min}}}.
\end{align*}
Going back to the initial term, for any $h$, we have that 
\begin{align*}
&|J_\beta^N(h,\gamma) - {\theta^*}^\intercal {\bf \Phi}(h)|\\
& \leq | J_\beta^N(h,\gamma) - \hat{\theta}^\intercal {\bf \Phi}(h)|+ | \hat{\theta}^\intercal {\bf \Phi}(h) -  {\theta^*}^\intercal {\bf \Phi}(h)|\\
&\leq \lambda +  \|\theta^* - \hat{\theta}\|_2  \| {\bf \Phi}(h)\|_2\leq \lambda +  \frac{2-\beta}{1-\beta}\frac{\lambda\sqrt{d}}{\sqrt{\sigma_{\min}}}
\end{align*}
where we used the assumption that $\|\Phi^i\|_\infty\leq 1$ for all basis functions. Hence, the proof is complete.
\end{proof}

The next result is the main result of this section, and provides an error upper-bound for the learned value function with respect to the true value of the finite-memory policy in the original environment.
\begin{theorem}
We assume that the unobserved state initiates at time $-N$ according to some $\mu_{-N}\in \P(\mathds{X})$, and the finite-memory policy $\gamma$ starts acting at time $t=0$. We denote by $h_0$, the finite-memory variables from time $t=-N$ to $t=0$. For ${z}_0=(\mu_{-N},h_0)$, with a policy $\hat{\gamma}$ acting on the first $N$ steps, we have that
\begin{align*}
&E_{\mu_{-N}}^{\hat{\gamma}}\left[\left|J_\beta({z}_0,{\gamma}) -{\theta^*}^\intercal {\bf \Phi}(h_0)\right|\right]\\
&\leq  \frac{\|c\|_\infty }{(1-\beta)}\sum_{t=0}^\infty\beta^tL_t + \lambda\left(1+ \frac{2-\beta}{1-\beta}  \sqrt{\frac{d}{\sigma_{\min}}  }\right)
\end{align*}
 where the expectation is with respect to the random realizations of the initial finite-memory variables $h_0$. 
\end{theorem}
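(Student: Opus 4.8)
The plan is to insert the approximate value function $J_\beta^N(h_0,\gamma)$ as an intermediate quantity and split the error with the triangle inequality:
\begin{align*}
\left|J_\beta({z}_0,{\gamma}) -{\theta^*}^\intercal {\bf \Phi}(h_0)\right|
\leq \left|J_\beta({z}_0,{\gamma}) - J_\beta^N(h_0,\gamma)\right|
+ \left|J_\beta^N(h_0,\gamma) - {\theta^*}^\intercal {\bf \Phi}(h_0)\right|.
\end{align*}
Taking the expectation $E_{\mu_{-N}}^{\hat\gamma}$ over the realization of $h_0$ (recall that $\hat\gamma$ governs times $-N$ through $-1$, exactly the setting of Proposition \ref{cont_bound}), the first term is controlled directly by the first bullet of Proposition \ref{cont_bound}, giving the contribution $\frac{\|c\|_\infty}{1-\beta}\sum_{t=0}^\infty\beta^t L_t$. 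This term is precisely the finite-memory (filter-stability) approximation error between the true POMDP value of $\gamma$ and the value of $\gamma$ in the approximate model of Section \ref{app_section}, whose fixed-point characterization as $J_\beta^N(\cdot,\gamma)$ under the uniform norm was used in Proposition \ref{l2_bound}.

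For the second term I would invoke Proposition \ref{uni_bound}: under Assumption \ref{near_lin} we have the \emph{uniform} bound $\|J_\beta^N(h,\gamma) - {\theta^*}^\intercal {\bf \Phi}(h)\|_\infty \leq \lambda\left(1+\frac{2-\beta}{1-\beta}\sqrt{d/\sigma_{\min}}\right)$, so in particular $\left|J_\beta^N(h_0,\gamma) - {\theta^*}^\intercal {\bf \Phi}(h_0)\right|$ is bounded by this quantity for every realization of $h_0$, and the expectation inherits the same bound. Combining the two contributions yields exactly the claimed inequality.

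The step that deserves the most care — and the reason the argument is structured this way — is the use of a \emph{sup-norm} rather than an $L_2(\pi)$ bound for the second term. The learned parameter $\theta^*$ is the fixed point of $\Pi^\pi T^\gamma$, where $\pi$ is the invariant measure of $(h_t,x_t,u_t)$ under $\gamma$; but the distribution of the initial window $h_0$ (induced by $\mu_{-N}$ and $\hat\gamma$) is in general unrelated to $\pi$, so the purely $L_2$ estimate of Proposition \ref{l2_bound} would not transfer to an expectation under a different measure. This is precisely what Assumption \ref{near_lin} buys us: near-linearity of $J_\beta^N(\cdot,\gamma)$ in the basis, combined with the eigenvalue lower bound $\sigma_{\min}$ on $E_\pi[{\bf \Phi}{\bf \Phi}^\intercal]$ and the normalization $\|\phi^i\|_\infty\le 1$, upgrades the $L_2(\pi)$ control into a pointwise control, as carried out in the proof of Proposition \ref{uni_bound}. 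No new estimates are needed beyond assembling these two previously established pieces.
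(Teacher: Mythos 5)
Your proposal is correct and follows essentially the same route as the paper's proof: a triangle inequality through $J_\beta^N(h_0,\gamma)$, with the first term handled by Proposition \ref{cont_bound} and the second by the uniform bound of Proposition \ref{uni_bound} (under Assumption \ref{near_lin}). Your remark on why the sup-norm bound is needed, because the law of $h_0$ need not match the invariant measure $\pi$, is exactly the consideration underlying the paper's use of Proposition \ref{uni_bound} rather than Proposition \ref{l2_bound}.
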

\begin{proof}
The proof is an application of Proposition \ref{cont_bound} and Proposition \ref{uni_bound}.
\begin{align*}
&E_{\mu_{-N}}^{\hat{\gamma}}\left[\left|J_\beta({z}_0,{\gamma}) -{\theta^*}^\intercal {\bf \Phi}(h_0)\right|\right]\\
&\leq  E_{\mu_{-N}}^{\hat{\gamma}}\left[\left|J_\beta({z}_0,{\gamma}) -  J_\beta^N(h,\gamma) \right|\right] \\
&\qquad+ E_{\mu_{-N}}^{\hat{\gamma}}\left[\left| J_\beta^N(h,\gamma) - {\theta^*}^\intercal {\bf \Phi}(h_0)  \right|\right]
\end{align*}
the first term is bounded by Proposition \ref{cont_bound} and the second term is bounded by Proposition \ref{uni_bound}.
\end{proof}

\section{Learning Approximate Optimal Q-Values }
In this section, we shift our focus to approximately learning the optimal Q-values using finite-memory and linear function approximations. 
We first study the case of general basis functions with a somewhat restrictive assumption. In this section, we extend our basis functions by using: $\{\phi^i(h,u)\}_{i=1}^d$ where  $\phi^i(h,u):\left(\mathds{Y}\times\mathds{U}\right)^N\to \mathds{R}$.
We assume that $
\|\phi^i\|_\infty\leq 1$
for all $i=1,\dots,d$.

 Consider the following iterations, where we denote by $V_t(h)=\min_v \theta_t^\intercal {\bf \Phi}(h,v)$:
\begin{align}\label{opt_iter}
\theta_{t+1}= \theta_t - \alpha_t {\bf \Phi}(H_t,U_t) &\big[\theta_t^\intercal {\bf \Phi}(H_t,U_t) \nonumber\\
&- c(X_t,U_t) - \beta V_t(H_{t+1}) \big]
\end{align}
where the actions are chosen under some time invariant finite-memory exploration policy $\gamma:\mathds{Y}^N\times\mathds{U}^{N-1}\to\mathds{U}$.
We assume the ergodicity condition under the exploration policy, meaning that Assumption \ref{erg_assmp} holds for the exploration policy $\gamma$.
We note that the ergodicity assumption is not as restrictive as in the previous section, since randomized exploration policies are more common in general. Therefore, imposing $\gamma(du|h)\geq \lambda(\cdot)$ is a design choice rather than a restriction in this section.
We denote by
\begin{align}\label{sigma_gamma}
\Sigma_\gamma:=E\left[{\bf \Phi}(h,u){\bf \Phi}^\intercal(h,u)\right]
\end{align}
where $(h,u)$ is distributed according  to the invariant measure of the process $(h_t,x_t,u_t)$ under the exploration policy $\gamma$. We also denote by $\gamma_\theta(h)=\argmin_u \theta^\intercal {\bf \Phi}(h,u) $ the greedy policy for the parameter $\theta$. We define
\begin{align}\label{sigma_star}
\Sigma_\theta:=E\left[{\bf \Phi}(h,\gamma_\theta(h)){\bf \Phi}^\intercal(h,\gamma_\theta(h))\right]
\end{align}
where $h$ is distributed according to the invariant measure of $(h_t,x_t,u_t)$.
We further define the Bellman operator under the greedy action selection such that 
\begin{align}\label{greedy_bell}
T f(h,u) := \hat{c}_\pi(h,u) + \beta \int \inf_vf(h_1,v)\eta_\pi(dh_1|h,u)
\end{align}
where $\pi$ is the invariant measure under the exploration policy. Similar to the projection map in (\ref{proj}), $\Pi^\pi$, in this section denotes the projection map over the span of the basis functions $\{\phi^i(h,u)\}_{i=1}^d$.

For the convergence of the algorithm, we impose the following assumption:
\begin{assumption}\label{rest_assmp}
For all $\theta\in\mathds{R}^d$
\begin{align*}
 \beta^2 \Sigma_\theta< \Sigma_\gamma.
\end{align*}
\end{assumption}
We note that this assumption is parallel to the assumption used in \cite{melo2008analysis}, and indicates that for large $\beta$, the greedy policy and the exploration policy are close to each other, which can be rather restrictive in practice.
\begin{theorem}
Under Assumption \ref{erg_assmp} for the exploration policy and Assumption \ref{rest_assmp}, if the learning rates are such that $\sum_t\alpha_t = \infty$ and $\sum_t \alpha_t^2<\infty$, then the iterations in (\ref{opt_iter}) converge to some $\theta^* \in\mathds{R}^d$. Denoting by $Q(h,u):={\theta^*}^\intercal {\bf \Phi}(h,u)$, $Q(h,u)$ is the fixed point of the mapping $\Pi^\pi T $ where the mappings $\Pi^\pi$ and $T$ are defined in (\ref{proj}) and (\ref{greedy_bell}).
\end{theorem}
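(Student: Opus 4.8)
The plan is to follow the template of the proof of Theorem~\ref{main_thm}, but the nonlinearity introduced by the $\min_v$ operator prevents a direct appeal to Proposition~\ref{vanroy_lem}; instead I would invoke a stochastic approximation result for mean fields arising from greedy action selection, in the spirit of \cite{melo2008analysis}, for which Assumption~\ref{rest_assmp} supplies the stability condition. Throughout I write $f_\theta:=\theta^\intercal{\bf \Phi}$, $V_\theta(h):=\min_v\theta^\intercal{\bf \Phi}(h,v)$, and $\|w\|^2_{\Sigma_\gamma}:=w^\intercal\Sigma_\gamma w$, and I assume $\|c\|_\infty<\infty$ and that the $\phi^i$ are linearly independent under $\pi$ (so $\Sigma_\gamma\succ 0$).

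\emph{Step 1 (mean field and candidate limit).} Set $\bar g(\theta):=E\big[{\bf \Phi}(h,u)\big(c(x,u)+\beta V_\theta(h_1)-\theta^\intercal{\bf \Phi}(h,u)\big)\big]$, with the expectation taken in the stationary regime of $(h_t,x_t,u_t)$ under the exploration policy $\gamma$ (so $(h,u)$ has the law in (\ref{sigma_gamma}) and $h_1$ is the next window). Using the disintegration of the invariant measure from Section~\ref{app_section} — the conditional law of $x_t$ given $h_t$ equals $P^{\pi_x}(dx_t\,|\,h_t)$ — together with the consistency relation (\ref{consistent}), one obtains $E[c(X,U)+\beta V_\theta(H_1)\,|\,h,u]=Tf_\theta(h,u)$ with $T$ as in (\ref{greedy_bell}); hence $\bar g(\theta)=E[{\bf \Phi}(h,u)(Tf_\theta(h,u)-f_\theta(h,u))]$. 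The first-order conditions for the $L_2(\pi)$-projection then show that $\bar g(\theta^*)=0$ is equivalent to $f_{\theta^*}=\Pi^\pi Tf_{\theta^*}$, i.e.\ $\theta^{*\intercal}{\bf \Phi}$ is the fixed point of $\Pi^\pi T$; existence and uniqueness of this fixed point follow, as in \cite{melo2008analysis}, from Assumption~\ref{rest_assmp} (which makes the parameter map $\theta\mapsto$ (parameter of $\Pi^\pi Tf_\theta$) a contraction in the $\Sigma_\gamma$-weighted norm).

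\emph{Step 2 (regularity and boundedness of the iterates).} The ergodicity and mixing hypotheses follow from Assumption~\ref{erg_assmp} for $\gamma$ applied to $(h_t,x_t,u_t)$, exactly as in the proof of Theorem~\ref{main_thm}. The update direction is globally Lipschitz in $\theta$, with constant at most $d(1+\beta)$, since $\|{\bf \Phi}(h,u)\|\le\sqrt d$ and $|V_\theta(h)-V_{\theta'}(h)|\le\max_v|(\theta-\theta')^\intercal{\bf \Phi}(h,v)|\le\sqrt d\,\|\theta-\theta'\|$; this also yields the required growth bounds. For a.s.\ boundedness of $\theta_t$ I would analyze the limiting scaled mean field $\bar g_\infty(\theta)=\lim_{r\to\infty}\bar g(r\theta)/r=E[{\bf \Phi}(h,u)(\beta V_\theta(h_1)-\theta^\intercal{\bf \Phi}(h,u))]$ (using $V_{r\theta}=rV_\theta$): since $V_0\equiv 0$ one has $|V_\theta(h_1)|\le|\theta^\intercal{\bf \Phi}(h_1,\gamma_\theta(h_1))|$, so by Cauchy--Schwarz, (\ref{consistent}) (which replaces the marginal law of $h_1$ by $\pi$), and Jensen, $\theta^\intercal\bar g_\infty(\theta)\le\beta\,\|\theta\|_{\Sigma_\gamma}\sqrt{\theta^\intercal\Sigma_\theta\theta}-\|\theta\|^2_{\Sigma_\gamma}<0$ for $\theta\neq 0$ by Assumption~\ref{rest_assmp}; hence the origin is globally asymptotically stable for the scaled ODE and the iterates remain bounded.

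\emph{Step 3 (stability of the mean ODE — the main obstacle).} It remains to show $(\theta-\theta^*)^\intercal\bar g(\theta)<0$ for $\theta\neq\theta^*$, i.e.\ global asymptotic stability of $\dot\theta=\bar g(\theta)$ at $\theta^*$ (with Lyapunov function $\|\theta-\theta^*\|^2$). Since $\bar g(\theta^*)=0$, we have $(\theta-\theta^*)^\intercal\bar g(\theta)=\beta\,E[(\theta-\theta^*)^\intercal{\bf \Phi}(h,u)(V_\theta(h_1)-V_{\theta^*}(h_1))]-\|\theta-\theta^*\|^2_{\Sigma_\gamma}$. The difficulty is that $V_\theta-V_{\theta^*}$ is only piecewise linear in the window; comparing the greedy choices for $\theta$ and $\theta^*$ yields the sandwich $(\theta-\theta^*)^\intercal{\bf \Phi}(h_1,\gamma_\theta(h_1))\le V_\theta(h_1)-V_{\theta^*}(h_1)\le(\theta-\theta^*)^\intercal{\bf \Phi}(h_1,\gamma_{\theta^*}(h_1))$, and hence $|V_\theta(h_1)-V_{\theta^*}(h_1)|\le|(\theta-\theta^*)^\intercal{\bf \Phi}(h_1,\tilde v(h_1))|$ for a measurable selection $\tilde v$ taking values in $\{\gamma_\theta(h_1),\gamma_{\theta^*}(h_1)\}$. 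Applying Cauchy--Schwarz and then (\ref{consistent}) to pass from the law of $h_1$ to $\pi$ bounds the cross term by $\beta\,\|\theta-\theta^*\|_{\Sigma_\gamma}\sqrt{(\theta-\theta^*)^\intercal\tilde\Sigma(\theta-\theta^*)}$ with $\tilde\Sigma:=E[{\bf \Phi}(h,\tilde v(h)){\bf \Phi}^\intercal(h,\tilde v(h))]$; Assumption~\ref{rest_assmp}, applied to the (deterministic) greedy and switched-greedy policies entering here — as implicitly in \cite{melo2008analysis} — gives $\beta^2\tilde\Sigma<\Sigma_\gamma$ and therefore $(\theta-\theta^*)^\intercal\bar g(\theta)<0$. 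Combining Steps~1--3 with the stochastic approximation theorem yields $\theta_t\to\theta^*$ a.s., and Step~1 identifies $\theta^{*\intercal}{\bf \Phi}$ as the fixed point of $\Pi^\pi T$. The points that are genuinely harder than in the policy-evaluation case are the loss of linearity in $\theta$ (which forces a nonlinear stochastic approximation result and the two-sided control of $\min_v$) and the fact that $\Pi^\pi T$ need not be a uniform-norm contraction, so that the whole convergence rests on the matrix inequality of Assumption~\ref{rest_assmp} — in particular on its being available for the switched-greedy policy appearing in the Lyapunov estimate.
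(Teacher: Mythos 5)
Your proposal is correct in substance and rests on exactly the same key estimate as the paper, but it packages the argument differently. The paper's (sketch) proof keeps the structure of Theorem \ref{main_thm}: it shows that on the span of the basis functions the composition $\Pi^\pi T$ is an $L_2(\pi)$-contraction, by bounding $\left|\min_v f(h,v)-\min_v g(h,v)\right|$ through evaluations along greedy policies and invoking Assumption \ref{rest_assmp} via $\Sigma_{\bar\theta}$, and then reuses the Lemma \ref{helpful_lem}/Proposition \ref{vanroy_lem} machinery to get negative definiteness of the mean drift and identify the limit as the fixed point. You instead work directly with the nonlinear mean field $\bar g(\theta)$: you identify its zero with the fixed point of $\Pi^\pi T$ through the first-order projection conditions, obtain a.s.\ boundedness from the scaled field $\bar g_\infty$, and prove $(\theta-\theta^*)^\intercal\bar g(\theta)<0$ by the greedy sandwich bound plus Cauchy--Schwarz, then appeal to a nonlinear stochastic-approximation theorem in the spirit of \cite{melo2008analysis}. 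Two of your observations are genuinely to the point: Proposition \ref{vanroy_lem} as stated concerns linear recursions $A(X_t)\theta_t+b(X_t)$ and cannot be applied verbatim to (\ref{opt_iter}) because of the $\min_v$ term, which is precisely why the ODE/Lyapunov route (or a nonlinear extension of that proposition) is the natural vehicle here; and the ``switched-greedy'' selection $\tilde v(h)\in\{\gamma_\theta(h),\gamma_{\theta^*}(h)\}$ is not the greedy policy of a single parameter, so Assumption \ref{rest_assmp} must be read as covering such $h$-dependent selections (or be mildly strengthened). Note that the paper's sketch has the same gloss — the ``maximum achieving $\bar\theta$'' there actually depends on $h$ — so your treatment is at least as careful on this delicate step. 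What each approach buys: the contraction viewpoint gives the fixed-point characterization of the limit immediately and mirrors the policy-evaluation proof, while your ODE viewpoint makes explicit the boundedness and stability arguments that the sketch leaves implicit; both ultimately stand or fall on the matrix inequality $\beta^2\Sigma_\theta<\Sigma_\gamma$ (uniformly in $\theta$, and for the induced selections).
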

\begin{proof}[Sketch of the proof]
The proof is very similar to the proof of Theorem \ref{main_thm} and uses Proposition \ref{vanroy_lem}. The key difference is to show that the composition operator $\Pi^\pi T $ is a contraction in $L_2(\pi)$. We set $\theta_f,\theta_g$, such that $f(h,u)=\theta_f^\intercal {\bf \Phi}(h,u)$.
 We can follow identical steps as in the proof of Lemma \ref{comp_contr} up to the following step
\begin{align*}
&\| T (f) -  T (g)\|^2_2 \leq \beta^2 {   \int \left(\min_vf(h,v) - \min_vg(h,v)\right)^2 \pi(dh) }.
\end{align*}
We can show that 
$\left|\min_vf(h,v) - \min_vg(h,v)\right|\leq \max_\theta \left|f(x,\gamma_\theta(x))-g(x,\gamma_\theta(x))\right|$. Denoting the maximum achieving $\theta$ by $\bar{\theta}$:
\begin{align*}
&\beta^2 {   \int \left(\min_vf(h,v) - \min_vg(h,v)\right)^2 \pi(dh) } \\
&\leq\beta^2 (\theta_f-\theta_g)^\intercal \int {\bf \Phi}(h,\gamma_{\bar{\theta}}(h))  {\bf \Phi}^\intercal(h,\gamma_{\bar{\theta}}(h)) \pi(dh)(\theta_f-\theta_g)\\
&=  \beta^2(\theta_f-\theta_g)^\intercal \Sigma_{\bar{\theta}}  (\theta_f-\theta_g)\\
&<(\theta_f-\theta_g)^\intercal \Sigma_{\gamma}  (\theta_f-\theta_g) = \|f-g\|_2^2
\end{align*}
where we used Assumption \ref{rest_assmp} for the last inequality.
\end{proof}

\subsection{Convergence under discretization}
For the analysis so far, we have worked with the $L_2$ norm over the functions of the finite-memory variables.  We have observed that the discrepancy between the exploration policy and the greedy policy within the Bellman operator makes the contraction analysis non-trivial for optimal Q-value estimation. In this section, we discuss some special cases for which the projection mapping does not expand the supremum norm of the functions. Accordingly, one can directly work with the uniform norm $\|\cdot\|_\infty$ for the contraction analysis.

Consider the invariant measure $\pi$ of the joint process $(h_t,x_t,u_t)$ under the exploration policy $\gamma$, and the basis functions $\{\phi^i(h,u)\}_{i=1}^d$. Recall the projection mapping $\Pi^\pi$; in this section, we study special cases where $\|\Pi^\pi(f)\|_\infty \leq \|f\|_\infty$ for any $f\in L_2(\pi)$. 

A special case occurs when $f$ belongs to the span of the basis functions $\{\phi^i(h,u)\}_{i=1}^d$. 
In particular, if the cost function $\hat{c}_\pi$ and the kernel $\eta_\pi$ are perfectly linear with respect to the basis functions, we then have that $\Pi^\pi(T(f)) = T(f)$ and thus
\begin{align*}
\||\Pi^\pi(T(f))-\Pi^\pi(T(f))\|_\infty = \|T(f)-T(g)\|_\infty\leq \beta\|f-g\|_\infty
\end{align*}
which can be used to show the convergence of the algorithm presented in (\ref{opt_iter}) without Assumption \ref{rest_assmp}. 

Another important case is linear approximation via discretization of the observation and the action spaces. For a weak Feller belief MDP (\cite{FeKaZa12, KSYWeakFellerSysCont}), \cite[Theorem 3.16]{SaLiYuSpringer}  has established near optimality of finite action policies. If $\mathds{U}$ is compact, a finite collection of action sets can be constructed, with arbitrary approximation error. Accordingly, we will assume that the action spaces are finite in the following. Let $\{B_i\}_{i=1}^{M}$ be disjoint subsets of $\mathds{Y}$ such that $\cup_{i=1}^{M}B_i=\mathds{Y}$. This discretization then implies a discretization on the finite-memory and action variables $(h,u)\in(\mathds{Y\times U})^N$.
We denote by $\{A_i\}_{i=1}^{(M\times |\mathds{U}|)^N}$ for the resulting discretization bins of the joint $(h,u)\in(\mathds{Y\times U})^N$ variable.
We define the following basis functions 
\begin{align*}
\phi^i(h,u)=\mathds{1}_{A_i}(h,u), \text{ for all } i=1,\dots, (M\times |\mathds{U}|)^N
\end{align*}
where $\mathds{1}_{A_i}(h,u)$ is the indicator function of the set $A_i$. Note that the projection map $\Pi^\pi$ is such that $\Pi^\pi(f)(h,u)=\theta^\intercal {\bf \Phi}(h,u)$, where $\theta=\Sigma^{-1}_\gamma E_{\pi}\left[ {\bf \Phi}(h,u)f(h,u)\right]$
for the invariant measure $\pi$ under the exploration policy $\gamma$ where $\Sigma_\gamma$ is defined in (\ref{sigma_gamma}). For the particular case of discretization, the basis functions $\phi^i$ are perfectly orthonormal and only one of them is equal to 1, and the rest are 0 for any input $(h,u)$. We then have that $
\Sigma^{-1}_\gamma(i,i)=\frac{1}{\pi(A_i)}
$
and it has $0$ entries for the non-diagonal elements. Thus, we can show that for some $(h,u)\in A_i$ 
\begin{align*}
\Pi^\pi(f)(h,u) &= \frac{\int_{A_i} f(h',u')\pi(dh',du')}{\pi(A_i)}\\
&=\int_{A_i} f(h',u')\pi_i(dh',du')\leq \sup_{h,u\in A_i} |f(h,u)|
\end{align*}
where $\pi_i(dh,du)$ is a probability measure normalized over $A_i$. Therefore, we have that $\|\Pi^\pi(f)\|_\infty\leq \|f\|_\infty$, and in particular,  the composition operator $\Pi^\pi T$ is a contraction under the supremum norm.  Due to these structural properties of the projection mapping based on the discretization, we can derive sharper error analysis results. The following is adapted from   \cite{devran2025near} based on the results in this paper:
\begin{assumption}\label{main_assmp}
\begin{itemize}
\item {$\mathds{Y}\subset\mathds{R}^n$} is compact.
\item $O(dy|x)=g(x,y)\lambda(dy)$, and $g(y,x)$ is Lipschitz in $y$, such that $|g(x,y)-g(x,y')|\leq\alpha_{\mathds{Y}} \|y-y'\|$ for every $y,y'\in\mathds{Y}$ and $x\in\mathds{X}$ for some $\alpha_{\mathds{Y}}<\infty$.
\item Stage-wise cost function $c(x,u)$ is bounded such that $\sup_{x,u}c(x,u)=\|c\|_\infty<\infty$.
\end{itemize}
\end{assumption}

\begin{theorem}
\begin{itemize}
\item
Under Assumption \ref{erg_assmp} for the exploration policy, if the learning rates are such that $\sum_t\alpha_t = \infty$ and $\sum_t \alpha_t^2<\infty$, then the iterations in (\ref{opt_iter}) converge to some $\theta^* \in\mathds{R}^d$.
\item
Suppose Assumption \ref{main_assmp} holds. Consider the learned policy $\gamma^N$, which satisfies $\gamma^N(h)=\argmin_u {\theta^*}^\intercal {\bf \Phi}(h,u)$. We assume that the unobserved state initiates at time $-N$ according to some $\mu_{-N}\in \P(\mathds{X})$, and the learned finite-memory policy $\gamma$ starts acting at time $t=0$. We denote by $h_0$, the finite-memory variables from time $t=-N$ to $t=0$. For ${z}_0=(\mu_{-N},h_0)$, with a policy $\hat{\gamma}$ acting on the first $N$ steps, we have that
\begin{align*}
&E_{\mu_{-N}}^{\hat{\gamma}}\left[\left|J_\beta({z}_0,{\gamma^N}) - J_\beta^*(z_0)\right|\right]\\
&\leq  \frac{2\|c\|_\infty }{(1-\beta)}\sum_{t=0}^\infty\beta^t\hat{L}_t + \frac{\beta}{(1-\beta)^2}\|c\|_\infty \alpha_{\mathds{Y}} L_\mathds{Y} \end{align*}
 where the expectation is with respect to the random realizations of the initial finite-memory variables $h_0$
where
\begin{align*}
 &L_{\mathds{Y}}:= \max_{i}\sup_{y,y'\in B_i}\|y-y'\|,\\
\hat{L}&_t:=\sup_{\hat{\gamma}\in\hat{\Gamma}}E_{\mu}^{\hat{\gamma}}\bigg[\|P^{\pi_t^-}(X_{t+N}\in\cdot|\hat{Y}_{[t,t+N]},U_{[t,t+N-1]})\nonumber\\
&\qquad\qquad-P^{\pi^*}(X_{t+N}\in\cdot|\hat{Y}_{[t,t+N]},U_{[t,t+N-1]})\|_{TV}\bigg]
\end{align*}
such that the filter stability term $\hat{L}_t$ is with respect to the discretized observations and $\alpha_{\mathds{Y}}$ is the Lipschitz constant of the density function $g$ of the channel $O$.
\end{itemize}
\end{theorem}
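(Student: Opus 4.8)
The plan is to establish the two assertions in turn, reusing the machinery already developed.

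\emph{Convergence of the iterations.} I would argue as in the proof of Theorem \ref{main_thm}, the only structural ingredient needed being that $\Pi^\pi T$ is a $\beta$-contraction in the supremum norm --- which is exactly what the discretization construction above provides, since with $\phi^i=\mathds{1}_{A_i}$ the map $\Pi^\pi$ is the $\pi$-conditional expectation onto the partition $\{A_i\}$, hence non-expansive in $\|\cdot\|_\infty$, so $\Pi^\pi T$ has a unique fixed point $Q={\theta^*}^\intercal{\bf \Phi}$. Note that, unlike in the general basis case, $\Pi^\pi T$ need not be an $L_2(\pi)$ contraction because of the $\min$ over actions inside $T$, so Proposition \ref{vanroy_lem} cannot be applied verbatim (there is no negative-definite matrix $A$); instead I would either read (\ref{opt_iter}) as asynchronous $Q$-learning on the finite aggregated state--action space driven by $\gamma$ --- which under Assumption \ref{erg_assmp} visits every bin infinitely often with the needed mixing, so convergence is classical --- or use the ODE method with limiting dynamics $\dot\theta=D\bigl(\Pi^\pi T(\theta^\intercal{\bf \Phi})-\theta^\intercal{\bf \Phi}\bigr)$, $D=\mathrm{diag}(\pi(A_i))$ (the drift being this object by the consistency identity (\ref{consistent}) together with the disintegration of $\pi$ discussed above), whose equilibrium $\theta^*$ is globally asymptotically stable with Lyapunov function $\max_i|\theta_i-\theta_i^*|$ because $\Pi^\pi T$ is a weighted-sup-norm contraction. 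Either route uses Assumption \ref{erg_assmp} for the exploration policy and $\sum_t\alpha_t=\infty$, $\sum_t\alpha_t^2<\infty$.

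\emph{Error bound.} I would decompose, for $z_0=(\mu_{-N},h_0)$,
\begin{align*}
\bigl|J_\beta(z_0,\gamma^N)-J_\beta^*(z_0)\bigr|
&\le\bigl|J_\beta(z_0,\gamma^N)-J_\beta^N(h_0,\gamma^N)\bigr|
+\bigl|J_\beta^N(h_0,\gamma^N)-J_\beta^N(h_0)\bigr|\\
&\quad+\bigl|J_\beta^N(h_0)-J_\beta^*(z_0)\bigr|,
\end{align*}
where $J_\beta^N(\cdot,\gamma^N)$ and $J_\beta^N(\cdot)$ denote the value of $\gamma^N$ and the optimal value in the approximate finite-memory model associated with the discretized observation window. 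Under $E_{\mu_{-N}}^{\hat\gamma}$, the first term is at most $\tfrac{\|c\|_\infty}{1-\beta}\sum_t\beta^t\hat L_t$ by the first bullet of Proposition \ref{cont_bound}, and the third by the same quantity via its second bullet --- here the relevant filter-stability constants are the $\hat L_t$ of the statement, because the learned policy reacts only to the discretized window, so the comparison is between the predictors $P^{\pi_t^-}$ and $P^{\pi^*}$ on that window; together these produce the $2\|c\|_\infty$ prefactor. The middle term is the suboptimality, within that model, of the policy greedy with respect to the aggregation fixed point $Q$, combined with the error of replacing $\mathds{Y}$ by its finite partition.

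\emph{The discretization term, and the main obstacle.} The middle term is where $\tfrac{\beta}{(1-\beta)^2}\|c\|_\infty\alpha_{\mathds{Y}}L_{\mathds{Y}}$ arises, and this is the estimate I would carry over from \cite{devran2025near}. The mechanism: within each bin the observation coordinates vary by at most $L_{\mathds{Y}}$, so by the Lipschitz assumption $|g(x,y)-g(x,y')|\le\alpha_{\mathds{Y}}\|y-y'\|$ the Bayesian window update $P^\pi(dx_t\mid h_t)$ varies in total variation by $O(\alpha_{\mathds{Y}}L_{\mathds{Y}})$; hence $\hat c_\pi$ and $\eta_\pi$ are nearly constant across bins, the exact optimal $Q$-function of the approximate model and the aggregation fixed point $Q$ differ in $\|\cdot\|_\infty$ by $O\!\bigl(\tfrac{\|c\|_\infty\alpha_{\mathds{Y}}L_{\mathds{Y}}}{1-\beta}\bigr)$, and a standard performance-difference bound for greedy policies turns this into a suboptimality of $O\!\bigl(\tfrac{\beta\|c\|_\infty\alpha_{\mathds{Y}}L_{\mathds{Y}}}{(1-\beta)^2}\bigr)$. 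Summing the three contributions gives the claimed inequality. The technical crux --- the only step that is not bookkeeping --- is the filter-continuity-in-the-observation estimate: that the $N$-step finite-memory posterior is Lipschitz in total variation in each observation coordinate with constant governed by $\alpha_{\mathds{Y}}$, and that this Lipschitz bound survives the infinite-horizon Bellman recursion; this is precisely the content imported from \cite{devran2025near}. The convergence half, by contrast, is routine once the sup-norm contraction of $\Pi^\pi T$ --- already in hand --- is fed into standard (asynchronous) stochastic-approximation theory.
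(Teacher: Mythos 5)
Your reconstruction is consistent with what the paper actually does here: the paper gives no self-contained proof of this theorem --- its in-text contribution is precisely the observation you reproduce, that with indicator basis functions $\Pi^\pi$ is a conditional-expectation (bin-averaging) map, hence non-expansive in $\|\cdot\|_\infty$, so $\Pi^\pi T$ is a $\beta$-contraction in the supremum norm --- and the convergence statement and the error bound are then imported ("adapted from \cite{devran2025near} based on the results in this paper"). Your additional remark that Proposition \ref{vanroy_lem} cannot be invoked verbatim (the update in (\ref{opt_iter}) is not of the affine form $A(X_t)\theta+b(X_t)$ because of the $\min$, and without Assumption \ref{rest_assmp} there is no $L_2(\pi)$ contraction to supply a negative-definite drift) is correct and is exactly why the discretized case is handled as asynchronous Q-learning on the finite aggregated state--action space, which is the route of the cited work; so your convergence argument matches the intended one rather than the proof of Theorem \ref{main_thm}. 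On the error bound, your three-term decomposition and the factor-of-two filter-stability contribution plus one discretization term reproduce the structure of the claimed inequality, but the bookkeeping is slightly loose: if $J_\beta^N(h_0)$ denotes the optimal value of the finite-memory model built on \emph{discretized} windows, then the third term $|J_\beta^N(h_0)-J_\beta^*(z_0)|$ is not controlled by the $\hat L_t$ filter-stability bound of Proposition \ref{cont_bound} alone --- it also contains the loss due to replacing $\mathds{Y}$ by the partition $\{B_i\}$, so the $\frac{\beta}{(1-\beta)^2}\|c\|_\infty\alpha_{\mathds{Y}}L_{\mathds{Y}}$ term should be split off there (or, equivalently, one first passes to the quantized-observation POMDP and only then applies the finite-memory bounds), rather than being attributed wholly to the middle suboptimality term. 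Since both you and the paper ultimately defer that Lipschitz-filter estimate to \cite{devran2025near}, this is an imprecision in the sketch rather than a missing idea, but it is the one place where your decomposition as written would not close without rearrangement.
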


\bibliographystyle{plain}

\bibliography{AliBibliography,references_acc,SerdarBibliography_acc,references}

\end{document}